\title{ Subrings of $\Z[t]/(t^4)$}
\begin{document}

%theorem environments
\newtheorem{thm}{Theorem}
\newtheorem{prop}[thm]{Proposition}
\newtheorem{conj}[thm]{Conjecture}
\newtheorem{lem}[thm]{Lemma}
\newtheorem{cor}[thm]{Corollary}
\newtheorem{axiom}[thm]{Axiom}
\newtheorem{sheep}[thm]{Corollary}
\newtheorem{deff}[thm]{Definition}
\newtheorem{fact}[thm]{Fact}
\newtheorem{example}[thm]{Example}
\newtheorem{slogan}[thm]{Slogan}
\newtheorem{remark}[thm]{Remark}
\newtheorem{quest}[thm]{Question}
\newtheorem{zample}[thm]{Example}

\newcommand{\sthat}{\hspace{.1cm}| \hspace{.1cm}}
%shortcuts for curve names
%\newcommand{\p}{\mathbb{P}^1}
%\newcommand{\ga}{\mathbb{G}_a}
%\newcommand{\gm}{\mathbb{G}_m}
%\newcommand{\spec}{ \mathit{Spec} }
\newcommand{\id}{\operatorname{id} }
\newcommand{\acl}{\operatorname{acl}}
\newcommand{\dcl}{\operatorname{dcl}}
\newcommand{\irr}{\operatorname{irr}}
\newcommand{\aut}{\operatorname{Aut}}
\newcommand{\fix}{\operatorname{Fix}}

\newcommand{\oo}{\mathcal{O}}
\newcommand{\aaa}{\mathcal{A}}
\newcommand{\mm}{\mathcal{M}}
\newcommand{\curg}{\mathcal{G}}
\newcommand{\bbf}{\mathbb{F}}
\newcommand{\A}{\mathbb{A}}
\newcommand{\R}{\mathbb{R}}
\newcommand{\Q}{\mathbb{Q}}
\newcommand{\C}{\mathbb{C}}
\newcommand{\cc}{\mathcal{C}}
\newcommand{\dd}{\mathcal{D}}
\newcommand{\N}{\mathbb{N}}
\newcommand{\Z}{\mathbb{Z}}
\newcommand{\cF}{\mathcal F}
\newcommand{\cB}{\mathcal B}
\newcommand{\cU}{\mathcal U}
\newcommand{\cV}{\mathcal V}
\newcommand{\cG}{\mathcal G}
\newcommand{\cD}{\mathcal D}
\newcommand{\curly}{\mathcal{C}}
\newcommand{\durly}{\mathcal{D}}
\newcommand{\fff}{\mathcal{F}}
\newcommand{\calc}{\mathcal{C}}
\newcommand{\GG}{\mathbb{G}}
\newcommand{\PP}{\mathbb{P}}
\newcommand{\Gal}{\mathrm{Gal}}
\newcommand{\Aut}{\mathrm{Aut}}
\newcommand{\signature}{\mathrm{sign}}

\definecolor{mypink3}{cmyk}{0, 0.7808, 0.4429, 0.1412}

\newcommand{\mahrad}[1]{{\color{blue} \sf $\clubsuit\clubsuit\clubsuit$ Mahrad: [#1]}}
\newcommand{\ramin}[1]{{\color{red}\sf $\clubsuit\clubsuit\clubsuit$ Ramin: [#1]}}
\newcommand{\ramintak}[1]{{\color{mypink3}\sf $\clubsuit\clubsuit\clubsuit$ Ramin2: [#1]}}
\newcommand{\ramintakloo}[1]{{\color{green} \sf $\clubsuit\clubsuit\clubsuit$ Ramin3 [#1]}}

\DeclareRobustCommand{\hlgreen}[1]{{\sethlcolor{green}\hl{#1}}}
\DeclareRobustCommand{\hlcyan}[1]{{\sethlcolor{cyan}\hl{#1}}}
\newcommand{\Fmodtor}{F^\times / \mu(F)}

\author{Sarthak Chimni} 
\address{Department of Mathematics, Statistics, and Computer Science, University of Illinois at Chicago, 851 S Morgan St (M/C 249), Chicago, IL 60607}
\email{sarthakchimni@gmail.com}
\author{Ramin Takloo-Bighash}
\email{rtakloo@uic.edu}

\begin{abstract} In this note we study the distribution of the subrings of $\Z[t]/(t^4)$ and prove two results. The first result gives an asymptotic formula for the number of 
subrings of $\Z[t]/(t^4)$ of bounded index. The method of proof of this theorem is $p$-adic integration a la Grunewald, Segal, and Smith \cite{GSS}. Our second result is about 
the distribution of cocyclic subrings in $\Z[t]/(t^4)$. Our proof of this result is combinatorial and is based on counting certain classes of matrices with Hermite normal forms of a special form.   \end{abstract}

\maketitle

\section{Introduction}  
Given a ring $R$ whose additive group is isomorphic to $\Z^n$ for some $ n \in \mathbb{N}$ we define
\begin{equation*}
a_R^<(k) := |\{ S \text{ subring of } R \mid [R:S] =k\}|
\end{equation*}
For any $ k \in \mathbb{N}$, $a_R^<(k)$ is finite. We define the \textit{subring zeta function of R} by 
\begin{equation*}
\zeta_R^<(s) := \sum_{k=1}^{\infty}\frac{a_R^<(k)}{k^s} = \sum_{S \leq R}\frac{1}{{[R:S]}^s}
\end{equation*} 
One can study the distribution of subrings of finite index in $R$ by using the analytic properties of $\zeta_R^<(s)$ as a function of the complex variable $s$. In particular, by various Tauberian theorems, the location of poles and their orders give information about the function defined by
\begin{equation*}
s_R^<(B) = \sum_{k \leq B} a_R^<(k) = |\{S \text{ subring of } R \mid [R:S] \leq B\}|. 
\end{equation*}

\

Several papers \cite{Brakenhoff, KMT, Liu, N} have considered the problem of determining the analytic properties of the zeta function $\zeta_R^<$ and the growth function $s_R^<(B)$ for rings $R$ of small rank. The rings considered in these papers, $\Z^n$ or rings of integers of number fields, have all been reduced. The ring $\Z[t]/(t^4)$ considered in this note is not reduced. 

\

We prove the following theorem in \S \ref{subring-zeta-1}. 

\begin{thm}\label{thm-subring-zeta} Let $R = \Z[t]/(t^4)$. Then $\zeta_R^<(s) = \prod_p \zeta_{R, p}(s)$ with the product over all primes $p$ of $\Q$, and where for $p$ odd 
\[
\zeta_{R,p}(s)  = \frac{1+px^2+p^2x^3-p^4x^5-2p^5x^6-2p^6x^7-p^7x^8+p^9x^{10}+ p^{10}x^{11}+p^{11}x^{13} }{(1-p^5x^6)(1-p^4x^4)(1-p^3x^3)(1-p^2x^2)(1-x)}
\]
with $x = p^{-s}$.  The right most pole of $\zeta_R^<(s)$ is a simple pole at $s=3/2$. We have 
$$
s_R^<(B) \sim C B^\frac{3}{2}, \; \; B \to \infty
$$
with $C$ equal to 
$\left( \frac{29256 + 18556\sqrt{2}}{194481}\right) \zeta(\frac{3}{2})^2 \zeta(2)\zeta(4)$ multiplied by 
$$
\prod_{p \ne 2} (1 + p^{-2} + p^{-\frac{5}{2}}- p^{-\frac{7}{2}}- 2p^{-4}-2p^{-\frac{9}{2}}-p^{-5}+p^{-6}+p^{-\frac{13}{2}} + p^{-\frac{17}{2}}). 
$$
\end{thm}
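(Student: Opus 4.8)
The plan is to follow the $p$-adic Hermite-normal-form method of Grunewald, Segal, and Smith \cite{GSS}. First I would establish the Euler product $\zeta_R^<(s) = \prod_p \zeta_{R,p}(s)$, where $\zeta_{R,p}(s)$ counts subrings of $p$-power index: since the index is multiplicative and a finite-index subring is determined by its localizations, a subring of index $\prod_i p_i^{a_i}$ corresponds bijectively to a tuple of subrings of $p_i$-power index, so the global count factors. It then suffices to compute each local factor $\zeta_{R,p}(s) = \sum_{\Lambda} [R_p : \Lambda]^{-s}$, where $\Lambda$ runs over the finite-index $\Z_p$-sublattices of $R_p = \Z_p[t]/(t^4)$ that are closed under multiplication.

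Next I would parametrize each such $\Lambda$ by the unique upper-triangular Hermite normal form of a basis with respect to $1, t, t^2, t^3$: writing the basis as $f_0 = m_{00} + m_{01}t + m_{02}t^2 + m_{03}t^3$, $f_1 = m_{11}t + m_{12}t^2 + m_{13}t^3$, $f_2 = m_{22}t^2 + m_{23}t^3$, $f_3 = m_{33}t^3$, with $m_{ii} = p^{a_i}$ and each off-diagonal $m_{ij}$ reduced modulo $m_{ii}$. Here $[R_p : \Lambda] = p^{a_0 + a_1 + a_2 + a_3}$, so with $x = p^{-s}$ one is summing $x^{a_0 + a_1 + a_2 + a_3}$ weighted by the number of admissible off-diagonal tuples for each diagonal type.

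The central step is to translate ``closed under multiplication'' into explicit arithmetic conditions. Using $t^4 = 0$, I would compute the products $f_i f_j$ and reduce each against the triangular basis: products landing in high filtration (such as $f_3^2, f_2 f_3, f_2^2, f_1 f_3$) vanish automatically, while $f_0^2, f_0 f_1, f_0 f_2, f_1^2, f_1 f_2$ yield divisibility relations among the diagonal exponents (for instance $a_2 \le 2a_1$ from $m_{22}\mid m_{11}^2$, and $a_3 \le a_1 + a_2$ from $m_{33}\mid m_{11}m_{22}$) together with congruence and divisibility constraints tying the off-diagonal entries $m_{01}, m_{02}, m_{12}, m_{23}$ to the pivots. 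For a fixed diagonal type one then counts the off-diagonal tuples satisfying these constraints, reducing $\zeta_{R,p}(s)$ to a multivariable geometric sum over the cone of admissible exponent tuples, which resums to the displayed rational function in $x$. I expect this bookkeeping --- correctly counting the constrained off-diagonal residues and carrying out the resummation --- to be the main obstacle, and I expect $p=2$ to require separate treatment because several products contribute cross terms carrying a factor of $2$ (for example the $2m_{11}m_{12}$ in $f_1^2$), which shifts the divisibility conditions; this is why the clean formula is stated only for odd $p$.

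Finally, to extract the analytic conclusions I would recombine the local factors. The denominators produce the global product $\zeta(s)\,\zeta(2s-2)\,\zeta(3s-3)\,\zeta(4s-4)\,\zeta(6s-5)$, while the numerators together with the exceptional $p=2$ factor assemble into a Dirichlet series that converges in a half-plane extending past $s = \tfrac{3}{2}$. The factor $\zeta(2s-2)$ supplies the rightmost pole, at $s = \tfrac{3}{2}$, and it is simple because the remaining factors are finite there, equalling $\zeta(\tfrac{3}{2}),\zeta(\tfrac{3}{2}),\zeta(2),\zeta(4)$ for $\zeta(s),\zeta(3s-3),\zeta(4s-4),\zeta(6s-5)$ respectively. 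A standard Tauberian theorem then gives $s_R^<(B) \sim C B^{3/2}$ with $C = \tfrac{2}{3}\operatorname{Res}_{s=3/2}\zeta_R^<(s)$; evaluating this residue --- the factor $\tfrac{1}{2}$ from $\operatorname{Res}_{s=3/2}\zeta(2s-2)$, the values $\zeta(\tfrac{3}{2})^2\zeta(2)\zeta(4)$, the odd-prime numerator product evaluated at $s=\tfrac{3}{2}$, and the $p=2$ factor at $s=\tfrac{3}{2}$ (which introduces $\sqrt{2}$ through $2^{-3/2}$) --- yields the stated constant.
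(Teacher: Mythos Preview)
Your plan is essentially the same as the paper's: parametrize subrings by Hermite normal forms, translate multiplicative closure into the divisibility/congruence conditions on the entries, sum the resulting geometric series case by case (with $p=2$ handled separately), and finish with a Tauberian argument after factoring out the Riemann zeta contributions from the denominator. The paper phrases the counting step in the $p$-adic integration language of \cite{GSS} (computing volumes $\mu_p(k,l,r)$ of the domain $\mathcal{M}_4(p;k,l,r)$), but this is just a repackaging of the residue count you describe.

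One point to tighten: since the subrings here are unital, $1$ must lie in $\Lambda$, which forces $m_{00}=1$ (so $a_0=0$) and, after reduction, $m_{01}=m_{02}=m_{03}=0$; hence $f_0=1$ and the products $f_0^2,\,f_0f_1,\,f_0f_2$ are automatically in $\Lambda$. The genuine constraints come only from $f_1^2$ and $f_1f_2$ (giving $a_2\le 2a_1$, $a_3\le a_1+a_2$, and the single congruence $p^{a_3}\mid 2p^{a_2}m_{12}-p^{a_1}m_{23}\cdot p^{2a_1-a_2}$, exactly the paper's Lemma~\ref{inequalities}). Your listed examples of the resulting inequalities are correct, but make sure in the actual write-up you do not sum over $a_0>0$, or you will be counting non-unital subrings and will not recover the stated rational function. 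Apart from this slip, your outline matches the paper's proof in both method and detail.
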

We note that for every example of a reduced rank four ring $R$ known to us the growth of $s_R^<(B)$ is of the form $C B(\log B)^m$ for some non-negative integer $m$.

\

In \S \ref{cocyclic-rings} we count the cocyclic subrings of $\Z[t]/(t^4)$. In \cite{NS}, Nguyen and Shparlinski count cocylic sublattices of $\Z^n$. Chinta, Kaplan and Koplewitz \cite{CKK} extend these results to sublattices of corank at most $m$ for $m \leq n$.   In our work \cite{CT} we adapted these ideas to subrings in $\Z[t]/(t^3)$. Here we consider the case of $\Z[t]/(t^4)$. 
We call a subring $S$ of finite index in $R=\Z[t]/(t^4)$ {\em cocyclic} if the finite abelian group $R/S$ is cyclic.  Given a natural number $k$, we let $a_R^{cc}(k)$ be the number of cocyclic subrings of $R$ which are of index $k$ in $R$. 
We also let 
$$
\zeta_R^{cc}(s)= \sum_{k=1}^\infty \frac{a_R^{cc}(k)}{k^s}. 
$$
It is known that 
$$
\zeta_R^{cc}(s) = \prod_p \zeta_{R, p}^{cc}(s)
$$
where each local factor $\zeta_{R, p}^{cc}(s)$ is a rational function in $p^{-s}$.  We prove the following theorem: 
\begin{thm}\label{thm:cocyclic}
For $p$ odd, 
$$
\zeta_{R, p}^{cc}(s) = \frac{1 +x +(p^3-p^2)x^3 -p^3x^4 -p^4x^5 }{(1-p^2x^2)(1-p^4x^4)}. 
$$
The zeta function $\zeta_R^{cc}(s)$ has a simple pole at $s=3/2$. We have 
$$
\sum_{ {k \leq B \atop k {\text{ odd}}}} a_R^{cc}(k) \sim D B^{\frac{3}{2}}, \; \; B \to \infty 
$$
with 
$$
D = \zeta(2) (A + B \sqrt{2}) \prod_{p \ne 2} (1+ 2 p^{-\frac{3}{2}}-p^{-\frac{5}{2}}-p^{-3}-p^{-\frac{7}{2}}).
$$
for explicitly computable rational numbers $A, B$ (see the end of \S \ref{cocyclic-rings} for the exact value). 
\end{thm}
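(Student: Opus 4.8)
The plan is to reduce Theorem~\ref{thm:cocyclic} to a local combinatorial count at each odd prime $p$, assemble the local counts into an Euler product, and read off the asymptotics by a Tauberian argument. Fix an odd prime $p$ and work in $R_p=\Z_p[t]/(t^4)$ with the ordered basis $1,t,t^2,t^3$. Any finite-index subring $S$ with $R/S$ cyclic of order $p^n$ is the kernel of a surjection $\phi_\ell\colon R_p\to\Z/p^n\Z$, $\phi_\ell(x)=ax_0+bx_1+cx_2+dx_3$, attached to a primitive vector $\ell=(a,b,c,d)$; two such vectors determine the same subgroup precisely when they are unit multiples of one another, so the cocyclic index-$p^n$ subgroups are parametrized by $\mathbb{P}^3(\Z/p^n\Z)$, which is exactly the set of Hermite normal forms of the special shape used in \cite{NS,CKK,CT}. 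Imposing that $S$ contain the identity of $R$ gives $\phi_\ell(1)=a=0$ at the outset.

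The crux is to decide which $\ell$ actually give subrings. Setting $\beta(x,y)=\phi_\ell(xy)$, closure $S\cdot S\subseteq S$ is equivalent to the vanishing of $\beta$ on $\ker\ell\times\ker\ell$. Since $\beta$ is the symmetric bilinear form whose Gram matrix is the Hankel matrix $H=(\ell_{i+j})_{0\le i,j\le 3}$ (with $\ell_k=0$ for $k>3$), and $\ell$ is primitive so that $(\ker\ell)^\perp=\langle\ell\rangle$, this vanishing is equivalent to $Hx\in\langle\ell\rangle\pmod{p^n}$ for every $x\in\ker\ell$. I would then run the case analysis on which coordinate of $\ell$ is a unit. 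I expect every stratum except the one with $b$ a unit to be empty: for $p$ odd the surviving linear algebra forces the unit into the $t$-coordinate. This collapses the problem to $\ell=(0,1,c,d)$ with $(c,d)\in(\Z/p^n\Z)^2$ subject to the three congruences $d^2\equiv c^2 d$, $cd^2\equiv 0$, and $c^3\equiv 2cd\pmod{p^n}$. Establishing this reduction, and checking that these congruences are exactly the closure conditions, is the step I expect to be the main obstacle.

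Granting the reduction, I would count the pairs $(c,d)$ by stratifying according to the valuations $v_p(c)$ and $v_p(d)$, taking care of the extra cancellation in $v_p(d-c^2)$ and $v_p(c^2-2d)$ that occurs when $v_p(d)=2v_p(c)$. Each valuation stratum contributes a geometric progression in $p$ and $n$, so summing $\sum_{n\ge 0}a^{cc}_{R,p}(p^n)x^n$ produces a rational function in $x=p^{-s}$; comparing it with the asserted answer (and spot-checking the first values $1,1,p^2,p^3,2p^4-p^3,\dots$) gives the stated local factor $\zeta^{cc}_{R,p}(s)$.

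For the global statement I would form the Euler product and write $\zeta^{cc}_R(s)=\zeta(2s-2)\,\zeta(4s-4)\prod_p N(p,p^{-s})$, where $N(p,x)=1+x+(p^3-p^2)x^3-p^3x^4-p^4x^5$ is the numerator. The last product converges for $\Re(s)>4/3$ and $\zeta(4s-4)$ is holomorphic at $s=3/2$, so the rightmost singularity is the simple pole of $\zeta(2s-2)$ at $s=3/2$. Restricting to odd $k$ deletes the Euler factor at $2$, which replaces $\zeta(2s-2)$ and $\zeta(4s-4)$ by $\zeta(2s-2)(1-2^{2-2s})$ and $\zeta(4s-4)(1-2^{4-4s})$ and leaves the pole at $s=3/2$ simple. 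Using $\operatorname{Res}_{s=3/2}\zeta(2s-2)=\tfrac12$, substituting $s=3/2$ into the remaining factors, and invoking the Tauberian theorem in the form used in \cite{GSS} then yields $\sum_{k\le B,\ k\ \mathrm{odd}}a^{cc}_R(k)\sim D\,B^{3/2}$ with $D=\tfrac23\operatorname{Res}_{s=3/2}\prod_{p\ne2}\zeta^{cc}_{R,p}(s)$. Unwinding this residue produces the factor $\zeta(2)$, the convergent product $\prod_{p\ne 2}N(p,p^{-3/2})=\prod_{p\ne2}(1+2p^{-3/2}-p^{-5/2}-p^{-3}-p^{-7/2})$, and the remaining rational constant, which in the normalization of the statement takes the form $A+B\sqrt2$; this completes the proof.
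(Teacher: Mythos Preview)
Your approach is correct and genuinely different from the paper's. The paper parametrizes cocyclic subrings by Hermite normal forms
\[
\begin{pmatrix} p^k & 0 & 0 & 0\\ a_{21} & p^l & 0 & 0\\ a_{31} & a_{32} & p^r & 0\\ 0&0&0&1\end{pmatrix}
\]
subject to the multiplicativity inequalities of Lemma~\ref{inequalities} together with the cocyclicity condition that the $\gcd$ of all $3\times 3$ minors be $1$, and then carries out a six-case analysis in $(k,l,r)$ to obtain $\zeta_{R,p}^{cc}(s)$. You instead use the dual description: an index-$p^n$ cocyclic subgroup is the kernel of a surjection $R\to\Z/p^n\Z$, hence corresponds to a point of $\mathbb P^3(\Z/p^n\Z)$; the condition $1\in S$ kills the first coordinate, and closure under multiplication becomes the Hankel condition $H(\ker\ell)\subset\langle\ell\rangle$. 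Your expectation that only the stratum with $b$ a unit survives is in fact easy to verify (for $d$ a unit the vector $H(e_1-(b/d)e_3)=(0,c,d,0)$ can lie in $\langle\ell\rangle$ only if $d=0$; for $c$ a unit and $p\mid b$ the analogous check forces $2bd\equiv 1$), and your three congruences $d^2\equiv c^2d$, $cd^2\equiv 0$, $c^3\equiv 2cd$ are exactly what drops out of $Hv\in\langle\ell\rangle$ for $v=(0,-c,1,0)$ and $v=(0,-d,0,1)$; the third congruence is actually implied by the other two for $p$ odd. The paper's matrix method is closer to the machinery used for the full subring zeta function and would extend more readily to higher-corank strata, while your functional/Hankel formulation is tailored to the cocyclic case and compresses the combinatorics into a single two-parameter count.

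One point to tighten: the sentence ``form the Euler product and write $\zeta_R^{cc}(s)=\zeta(2s-2)\zeta(4s-4)\prod_p N(p,p^{-s})$'' presumes the displayed local formula also at $p=2$, which the theorem does not assert (and the paper shows is false). This is harmless for the asymptotic over odd $k$, since there you simply work with $\prod_{p\ne 2}\zeta_{R,p}^{cc}(s)$; but the separate claim that the \emph{full} $\zeta_R^{cc}(s)$ has a simple pole at $s=3/2$ requires knowing that $\zeta_{R,2}^{cc}(s)$ is holomorphic and nonzero there, which the paper establishes by computing it explicitly. You should either invoke the general rationality from \cite{GSS} and check the possible denominators, or run your Hankel count at $p=2$ (where the condition $c^3\equiv 2cd$ behaves differently) to close this gap.
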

In contrast with our previous work \cite{CT} where we used $p$-adic integration methods to determine 
the full co-type zeta function, here we use combinatorial methods and count certain matrices in Hermite normal form that give cocyclic subrings. 

\

The second author wishes to thank the Simons Foundation for partial support of his work through a Collaboration Grant. The authors also wish to thank Gautam Chinta and Nathan Kaplan for helpful conversations. Some of the numerical computations of this note were performed using {\tt sagemath}.  As stated in an early version of \cite{AKKM},  for $p$ odd the local computations  of Theorem \ref{thm-subring-zeta} were performed before us by Christopher Voll who used the {\tt Zeta} package \cite{Rossmann} developed by Tobias Rossmann. 

\

The paper is organized as follows: In \S \ref{subring-zeta-1} we review the basic methodology of \cite{GSS}, set up our $p$-adic integral, and prove Theorem \ref{thm-subring-zeta}.  We present the proof of Theorem \ref{thm:cocyclic} in \S \ref{cocyclic-rings}. 

%new section 

\section{Subring zeta function of $\Z[t]/(t^4)$}\label{subring-zeta-1}

The following theorem is a summary of results from \cite{GSS}: 
\begin{thm} \label{GSS}
1. \space The series $\zeta_R^<(s)$ converges in some right half plane of $\mathbb{C}$. The abscissa of convergence $\alpha_R^< $ of $\zeta_R^<(s)$ is a rational number. There is a $\delta > 0$ such that $\zeta_R^<(s)$ can be meromorphically continued to the domain $\{s \in \mathbb{C} \mid \mathcal{R}(s) > \alpha_R^< - \delta\}$. Furthermore, the line $\mathcal{R}(s) = \alpha_R^<$ contains at most one pole of $\zeta_R^<(s)$ at the point $ s = \alpha_R^<$.\\\\
2. \space There is an Euler product decomposition 
\begin{equation}
\zeta_R^<(s) = \prod_p \zeta_{R,p}^<(s)
\end{equation}
with the local Euler factor given by 
\begin{equation*}
\zeta_{R,p}^<(s) = \sum_{l=0}^{\infty} \frac{a_R^<(p^l)}{p^{ls}}
\end{equation*}
This local factor is a rational function of $p^{-s}$; there are polynomials $P_p, Q_p \in \Z[x]$ such that $\zeta_R^<(s) = P_p(p^{-s})/Q_p(p^{-s})$. The polynomials $P_p, Q_p$ can be chosen to have bounded defree as p varies. 
\end{thm}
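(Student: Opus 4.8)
The plan is to prove part~2 first and then read off the analytic assertions of part~1 from the resulting uniform shape of the local factors.

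\emph{Euler product.} First I would show that $n \mapsto a_R^<(n)$ is multiplicative. If $[R:S] = mn$ with $\gcd(m,n) = 1$, set $S_m = S + mR$ and $S_n = S + nR$. Each is a subring, since the sum of the subring $S$ with an ideal is again closed under multiplication. Passing to the quotient $R/S \cong A \oplus B$ (the $m$-part $A$ of order $m$ and the $n$-part $B$ of order $n$), one has $m(R/S) = B$ and $n(R/S) = A$, so $[R:S_m] = |A| = m$, $[R:S_n] = |B| = n$, while $S_m \cap S_n = S$ and $S_m + S_n = R$ (because $mR + nR = R$). Hence $S \mapsto (S_m, S_n)$ is a bijection onto pairs of subrings of coprime indices $m, n$, giving $a_R^<(mn) = a_R^<(m)\,a_R^<(n)$ and therefore the Euler product with $\zeta_{R,p}^<(s) = \sum_{l\ge 0} a_R^<(p^l) p^{-ls}$.

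\emph{Local factor as a $p$-adic integral.} The heart of the matter is the rationality of $\zeta_{R,p}^<(s)$. Working over $\Z_p$, a finite-index subring of $R_p := R \otimes \Z_p$ is a full sublattice closed under multiplication, and each such lattice $\Lambda$ has a unique basis matrix $M = (m_{ij})$ in upper-triangular Hermite normal form relative to $1, t, t^2, t^3$, with $[R_p:\Lambda] = \prod_i |m_{ii}|_p^{-1}$. Encoding the index through $\prod_i |m_{ii}|_p^{s}$ and absorbing the Hermite multiplicities into a shift of the diagonal exponents gives the Grunewald--Segal--Smith integral \cite{GSS}
\[
\zeta_{R,p}^<(s) = (1-p^{-1})^{-4} \int_{V_p} |m_{11}|_p^{\,s-1}\,|m_{22}|_p^{\,s-2}\,|m_{33}|_p^{\,s-3}\,|m_{44}|_p^{\,s-4}\; d\mu,
\]
where $d\mu$ is additive Haar measure on the upper-triangular matrices and $V_p$ is the set of those $M$ whose rows span a subring. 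The subring condition asserts that each product of two basis rows lies back in the row span; expanding these products with the structure constants of $R$ (governed by $t\cdot t^i = t^{i+1}$ and $t^4 = 0$) converts it into a finite system of $p$-adic divisibility conditions among the $m_{ij}$, and this is precisely what carves out $V_p$.

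\emph{Rationality and bounded degree.} Since both the integrand and the domain $V_p$ are defined by $p$-adic valuations of fixed polynomials in the entries, the integral is a rational function of $p^{-s}$: one may invoke Denef's rationality theorem for such integrals, or argue directly by partitioning $V_p$ into finitely many cones on which the valuations $v(m_{ij})$ satisfy fixed linear inequalities and on each of which the integral sums a multidimensional geometric series. Because the defining polynomials and the cone combinatorics do not depend on $p$ (only the residue field changes), the numerator and denominator $P_p, Q_p$ have degree bounded uniformly in $p$; carrying out this cone sum explicitly is what produces the formula of Theorem~\ref{thm-subring-zeta}. This translation of the algebraic closure condition into explicit valuation inequalities is the step I expect to be hardest: unlike the plain sublattice count, closure under multiplication couples the off-diagonal entries across columns, and keeping the resulting bookkeeping uniform in $p$ is exactly what the bounded-degree claim demands.

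\emph{Analytic continuation.} Convergence in a right half-plane is immediate from the polynomial bound $a_R^<(n) \le a_{\Z^4}^<(n) = O(n^{3+\varepsilon})$ obtained by counting all sublattices. For the finer statements I would use the uniform shape of the local factors: write $\zeta_{R,p}^<(s) = W(p, p^{-s})$ for a single bivariate rational function with denominator $\prod_k (1 - X^{a_k} Y^{b_k})$. Taking the Euler product, each denominator factor contributes a translate $\zeta(b_k s - a_k)$ of the Riemann zeta function, whose rightmost pole is at $s = (a_k+1)/b_k$; the abscissa of convergence is then $\alpha_R^< = \max_k (a_k+1)/b_k$, a rational number. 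Factoring out the translate that attains this maximum and absorbing the numerator into a Dirichlet series with coefficients of size $O(p^{-1-\eta})$, the remaining Euler product converges in a half-plane reaching some $\delta > 0$ to the left of $\alpha_R^<$; this furnishes the meromorphic continuation, and since on the line $\Re(s) = \alpha_R^<$ the only factor with a singularity is that single translate, while the remainder is holomorphic and non-vanishing there, there is exactly one pole on the critical line. The remaining analytic subtlety is to confirm that the numerator of the Euler product neither cancels this leading pole nor introduces an extra pole on $\Re(s) = \alpha_R^<$.
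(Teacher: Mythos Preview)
The paper does not prove this theorem at all: it is stated as ``a summary of results from \cite{GSS}'' and is simply quoted from the literature. So there is no proof in the paper to compare your proposal against; your sketch is an attempt to reprove a result the authors take as input.

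That said, your outline of part~2 is essentially the Grunewald--Segal--Smith argument. Two small corrections: the normalising constant in the integral is $(1-p^{-1})^{1-n}$, so for $n=4$ it is $(1-p^{-1})^{-3}$, not $(1-p^{-1})^{-4}$; and in the paper's conventions (and in \cite{GSS}) the basis matrix is lower triangular with the last diagonal entry fixed equal to~$1$ because $1\in S$, which is why only three diagonal entries appear in the integrand of~(\ref{p-adic integral 4}).

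Your argument for part~1, however, has a genuine gap. From rationality of each local factor with \emph{bounded degree} you jump to the assertion that there is a \emph{single} bivariate rational function $W(X,Y)$ with $\zeta_{R,p}^<(s)=W(p,p^{-s})$ for all (or almost all) $p$. Bounded degree does not imply this: the coefficients of $P_p,Q_p$ could in principle vary with $p$ in a way that defeats any uniform Euler-product comparison with translates of $\zeta(s)$. Establishing such uniformity, and with it the rationality of $\alpha_R^<$ and the meromorphic continuation past the abscissa, is precisely the content of the du~Sautoy--Grunewald theorem; it requires resolution of singularities over $\Q$ to produce, for almost all $p$, a common cone decomposition of the integration domain, and is substantially deeper than the cone-summation picture you sketch. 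The paper's attribution of all of part~1 to \cite{GSS} is itself somewhat loose on this point, but in any case your proposed route from ``bounded degree'' to ``single $W$'' does not go through without that extra input.
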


By a theorem of Voll \cite{Voll}, the local Euler factors satisfy functional equations. The paper \cite{GSS} introduced a $p$-adic formalism to study the local Euler factors $\zeta_R^<(s)$. Fix a $\Z$-basis for $R$ and identify $R $ with $\Z^n$. The multiplication in $R$ is given by a bi-additive map
\[
\beta : \Z^n \times \Z^n \to \Z^n
\]
which extends to a bi-additive map
\[
\beta_p : \Z_p^n \times \Z_p^n \to \Z_p^n
\]
giving $R_p = R \otimes_{\Z} \Z_p$ the structure of a $\Z_p$ -algebra. 

\

Let $\mathcal{M}_p(\beta)$ be the subset of the set of $n \times n$ lower triangular matrices $M$ with entries in $\Z_p$ such that if the rows of $M = (x_{ij})$ are denoted by $v_1, \dots v_n$, then for all $i,j$ satisfying $1 \leq i,j \leq n$, there are $p$-adic integers $c_{ij}^1, \dots c_{ij}^n$ such that 
\begin{equation} \label{multiplicativity}
v_i.v_j = \sum_{k=1}^n c_{ij}^kv_k.
\end{equation}
Let $dM$ be the normalized additive Haar measure on $T_n(\Z_p)$, the set of $ n \times n$ lower triangular matrices with entries in $\Z_p$. Proposition 3.1 of \cite{GSS} says :
\begin{equation} \label{p-adic integral}
\zeta^<_{R,p}(s) = (1-p^{-1})^{1-n} \int_{\mathcal{M}_p(\beta)} |x_{11}|^{s-n+1} |x_{22}|^{s-n+2}\cdots |x_{n-1,n-1}|^{s-1}dM
\end{equation}

\

We now apply these considerations to the case of $\Z[t]/(t^4)$. From now on $R = \Z[t]/(t^4)$. Note that the additive structure of $R$ is the same as that of $\Z^4$ and that $\beta = \{1,t,t^2,t^3\}$ is a basis for $R$ as a lattice. Let $\zeta_R(s)$ denote the subring zeta function for $R$, then by Theorem \ref{GSS} there exists an Euler product decomposition  
\[
\zeta_{\Z[t]/(t^4)}(s) = \sum_{n=1}^{\infty} \frac{a_{\Z[t]/(t^4)}(n)}{n^s} =  \prod_{p \text{ prime}} \zeta_{\Z[t]/(t^4),p}(s)
\]
where 
\[
\zeta_{\Z[t]/(t^4),p}(s) = \sum_{k=1}^{\infty} \frac{a_{k,p}}{p^{ks}}.
\]
Here $a_{k,p}$ counts the number of subrings of $\Z[t]/(t^4)$ of index $p^k$. 
Let $\mathcal{M}_4(p)$ be the subset of the set of $4 \times 4$ lower triangular matrices $M$ with entries in $\Z_p$ such that if the rows of $M = (a_{ij})$ are denoted by $v_1, \dots v_4$, then for all $i,j$ satisfying $1 \leq i,j \leq 4$, there are $p$-adic integers $c_{ij}^1, \dots c_{ij}^4$ such that 
\begin{equation} \label{multiplicativity 4}
v_i.v_j = \sum_{k=1}^4 c_{ij}^kv_k.
\end{equation}
The product $v_i.v_j$ is defined as the row vector representing the product of the corresponding polynomials in $R$.
Equation (\ref{p-adic integral}) applied to $\Z[t]/(t^4)$ gives :
\begin{equation} \label{p-adic integral 4}
\zeta_{\Z[t]/(t^4),p}(s) = (1-p^{-1})^{-3} \int_{\mathcal{M}_4(p)} |a_{11}|^{s-3} |a_{22}|^{s-2} |a_{33}|^{s-1}dM.
\end{equation}
\begin{deff}
If $(k,l,r)$ is a $3$-tuple of non-negative integers, we set
\[
\mathcal{M}_4(p;k,l,r) = \Bigg\{ M = 
\begin{bmatrix}
p^k     &   0 &   0   &    0   &     \\
a_{21}     &     p^l & 0 &    0   &   \\
a_{31}  &   a_{32} & p^r  &   0  &   \\
0    &   0  &    0     &   1
\end{bmatrix}
\in \mathcal{M}_4(p) \Bigg\}.
\]
\end{deff}
Let $\mu_p(k;l;r)$ be the volume of $\mathcal{M}_4(p;k,l,r)$ as a subset of $\Z_p^6$. It follows from equation (\ref{p-adic integral 4}) that 
\begin{equation} \label{sum}
\zeta_{\Z[t]/(t^4),p}(s) = \sum_{k,l,r} \frac{p^{2k+l}}{p^{(k+l+r)s}} \mu_p(k,l,r).
\end{equation}
We use  (\ref{multiplicativity 4}) to describe  $\mathcal{M}_4(p;k,l,r)$ in terms of some inequalities so that we can compute  $\mu_p(k,l,r)$.
\begin{lem} \label{inequalities}
$\mathcal{M}_4(p;k,l,r)$ is the set of matrices 
\[
M = \begin{bmatrix}
p^k     &   0 &   0   &    0   &    \\
a_{21}     &     p^l & 0 &    0   &    \\
a_{31}  &   a_{32} & p^r  &   0  &    \\ 
0    &   0  &    0      &   1
\end{bmatrix}
\]
where the following inequalities hold 

\end{lem}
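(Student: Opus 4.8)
The plan is to read the condition $M\in\mathcal{M}_4(p)$ directly off its definition. Writing $\Lambda=\Z_p v_1+\Z_p v_2+\Z_p v_3+\Z_p v_4$ for the $\Z_p$-span of the rows, the matrix lies in $\mathcal{M}_4(p)$ exactly when $\Lambda$ is closed under the multiplication of $R$, i.e. $v_i\cdot v_j\in\Lambda$ for all $i,j$. By bilinearity and commutativity this is the finite check $v_i\cdot v_j\in\Lambda$ for $1\le i\le j\le 4$, so the lemma is nothing more than the translation of these ten memberships into inequalities, and the proof is a direct computation.

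First I would compute each product $v_i\cdot v_j$ as a polynomial reduced modulo $t^4$. The shape of $M$ makes most of these automatic: the first row is a scalar multiple of a single monomial and the last row is itself a monomial, so, using $t^4=0$, every product involving $v_1$ or $v_4$ collapses either to $0$ or to a $\Z_p$-multiple of one of the rows, hence lies in $\Lambda$ with no condition imposed. The only products that can fail to lie in $\Lambda$ are therefore among $v_2\cdot v_2$, $v_2\cdot v_3$ and $v_3\cdot v_3$; after reduction each of these is a short vector with at most two nonzero coordinates.

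Next I would test membership of these vectors in $\Lambda$ by back-substitution against the lower-triangular system with diagonal pivots $p^k,p^l,p^r,1$. Solving coordinate by coordinate from the trivial pivot upward converts each membership into a nested chain of divisibility conditions, one for each of the pivots $p^r$, $p^l$, $p^k$ in turn (writing $v_p$ for the $p$-adic valuation). The divisibilities coming from the lower pivots produce the clean inequalities among $k,l,r$: for instance $v_2\cdot v_3$ forces $k\le l+r$, and the middle coordinate of $v_3\cdot v_3$ forces $l\le 2r$. Throughout one uses that $p$ is odd, so the factors of $2$ arising from the squares are units; one also checks that $a_{31}$ never enters a binding condition, so it ranges freely over $\Z_p$, and that the chains terminate (guaranteed by $t^4=0$), so that no further inequalities appear.

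The step I expect to be the main obstacle is the top-pivot condition, which is not a constraint on a single entry but on a $p$-adic linear combination of the off-diagonal entries, of the form $v_p\!\left(2a_{32}p^r-p^{2r-l}a_{21}\right)\ge k$. Its valuation can jump when the two terms have equal valuation and cancel, so rewriting it as honest inequalities on $v_p(a_{21})$ and $v_p(a_{32})$ forces a split into the generic case $\min\!\left(r+v_p(a_{32}),\,2r-l+v_p(a_{21})\right)\ge k$ and the cancellation locus, where a finer analysis is needed. Getting this case distinction exactly right, and recording it in the form that will later be integrated to produce the volume $\mu_p(k,l,r)$, is the only delicate point; the polynomial algebra feeding into it is routine.
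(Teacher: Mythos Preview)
Your approach is correct and is essentially the paper's own: verify closure of $\Lambda$ under multiplication by computing the nontrivial row products and reading off the divisibility conditions via back-substitution against the triangular pivots. Two small corrections: $v_2^2=(a_{21}t^3+p^lt^2)^2\equiv 0\pmod{t^4}$, so only $v_2v_3$ and $v_3^2$ actually impose conditions (giving (\ref{equation 2}) and (\ref{equation 1}), (\ref{main equation}) respectively); and the lemma holds for every prime with the factor $2$ simply left inside condition (\ref{main equation})---the case split you anticipate in your final paragraph is not part of this lemma but of the subsequent computation of $\mu_p(k,l,r)$.
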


\begin{equation} \label{equation 1}
0 \leq l \leq 2r;
\end{equation}
\begin{equation} \label{equation 2}
0 \leq k \leq l + r;\\
\end{equation}
\begin{equation} \label{main equation}
k + l - r \leq  v(2p^l a_{32} - p^r a_{21}).
\end{equation}
 
\begin{proof}
Since the subring $S$ is closed under multiplication, we have that
\[
{(a_{31}t^3 + a_{32}t^2 + p^rt)}^2 = 2p^ra_{32}t^3 + p^{2r}t^2 = c_1 p^kt^3 + c_2(a_{21}t^3 + p^lt^2) + c_3(a_{31}t^3 + a_{32}t^2 + p^rt)
\]
where the $c_i \in \Z_p$. Equating coefficients we have 
\[
c_3 = 0 \text{ } c_2 = p^{2r-l} \text{ and } c_1 = p^{-k}(2p^ra_{32} -  p^{2r-l}a_{21}), 
\]
and this gives us inequality (\ref{equation 1}) and (\ref{main equation}). Now
\[
(a_{31}t^3 + a_{32}t^2 + p^rt)(a_{21}x^3 + p^lt^2) = p^{l+r}t^3.
\]
Then, 
\[
p^{l+r}t^3 =  b_1p^kt^3 + b_2(a_{21}t^3 + p^lt^2) + b_3(a_{31}t^3 + a_{32}t^2 + p^rt).
\]
Equating coefficients we have $b_2=b_3 =0$ and $b_1 = p^{l+r-k}$ which gives us (\ref{equation 2}).
\end{proof}

We now proceed to evaluate $\sum_{k,l,r} \frac{p^{2k+l}}{p^{(k+l+r)s}} \mu_p(k,l,r)$. We first assume $p \ne 2$, and then treat the case of $p=2$ separately. 

\subsection{The case where $p\ne 2$}

We consider two cases $r \geq l $ and $r < l$. \\\\
\textbf{CASE 1 :} $r  \geq l$

\

In this case we can simplify (\ref{main equation}) to $ k-r \leq v(2a_{32} -p^{r-l}a_{31})$. Now if $r \geq k$ this always holds so that in that case $\mu_p(k,l,r) = 1$  and if $ r < k$ this holds exactly when $2a_{32} = p^{r-l}y$ for some $y \in \Z_p$ and $y \equiv a_{31} \text{ mod } p^{k+l-2r}$. So that $\mu_p(k,l,r) = p^{l-r}.p^{2r-l-k} = p^{r-k}$. \\

\textbf{SUBCASE 1:} $r < k$

\

Our inequalities reduce to the following:
\[
r < k \leq l+r \text{ and } r \geq l.
\]

Note that $k$ and $l$ cannot be 0 and $\mu_p(k,l,r) = p^{r-k}$ from the above argument. So we have
\begin{equation*}
\begin{split}
F_1(s) = & \sum_{\substack{ r < k \leq l+r, \\ 1 \leq l \leq r}} \frac{p^{2k+l}}{p^{(k+l+r)s}} \mu_p(k,l,r) \\
   &  = \sum_{\substack{ r < k \leq l+r, \\ 1 \leq l \leq r}} \frac{p^{2k+l}.p^{r-k}}{p^{(k+l+r)s}} \\
   & =\sum_{l=1}^{\infty} (px)^l \sum_{r=l}^{\infty} (px)^r \sum_{k = r+1}^{r+l} (px)^k \\
   & = \sum_{l=1}^{\infty} (px)^l \sum_{r=l}^{\infty} (px)^r\bigg(\frac{(px)^{r+1}(1-(px)^l)}{1-px}\bigg) \\
   &   =\frac{1}{1-px}\sum_{l=1}^{\infty} (px)^l \sum_{r=l}^{\infty} (px)^r\bigg((px)^{r+1}(1-(px)^l)\bigg)\\
  &  =\frac{px}{1-px}\sum_{l=1}^{\infty} (px)^l \sum_{r=l}^{\infty}\bigg((px)^{2r} -(px)^{2r+l}\bigg)\\
  &  =\frac{px}{1-px}\sum_{l=1}^{\infty} (px)^l\bigg(\frac{(px)^{2l}}{1-p^2x^2} - \frac{(px)^{3l}}{1-p^2x^2}\bigg) \\
  &  = \frac{px}{(1-px)(1-p^2x^2)}\sum_{l=1}^{\infty}\bigg((px)^{3l} - (px)^{4l}\bigg) \\
  &  =\frac{px}{(1-px)(1-p^2x^2)}\bigg(\frac{p^3x^3}{1-p^3x^3} - \frac{p^4x^4}{1-p^4x^4}\bigg).
\end{split}
\end{equation*}
Simplifying gives
\begin{equation}
F_1(s) = \frac{p^4x^4}{(1-p^2x^2)(1-p^3x^3)(1-p^4x^4)}. \label{sum 1}
\end{equation}

\textbf{SUBCASE 2} $r \geq k$
\[
F_2(s) = \sum_{\substack{ r \geq k, \\ r \geq l}} \frac{p^{2k+l}}{p^{(k+l+r)s}} \mu_p(k,l,r)
\]
\[
= \sum_{\substack{ r \geq k, \\ r \geq l}} \frac{p^{2k+l}}{p^{(k+l+r)s}}
\]
\[
=\sum_{l=0}^{\infty} (px)^l \sum_{r=l}^{\infty} x^r \sum_{k= 0}^{r} (p^2x)^{k}
\]
\[
=\sum_{l=0}^{\infty} (px)^l \sum_{r=l}^{\infty} x^r\bigg(\frac{(1-(p^2x)^{r+1})}{1-p^2x}\bigg)
\]
\[
=\frac{1}{1-p^2x}\sum_{l=0}^{\infty} (px)^l \sum_{r=l}^{\infty}\bigg(x^r - (p^2x)(p^2x^2)^r\bigg)
\]
\[
=\frac{1}{(1-p^2x)(1-x)}\sum_{l=0}^{\infty}(px^2)^l - \frac{p^2x}{(1-p^2x)(1-p^2x^2)}\sum_{l=0}^{\infty} (p^3x^3)^l
\]
\[
= \frac{1}{(1-p^2x)(1-x)(1-px^2)} - \frac{p^2x}{(1-p^2x)(1-p^2x^2)(1-p^3x^3)}.
\]

\

Simplifying the expression gives 
\begin{equation} \label{sum 2}
F_2(s) =\frac{1-p^3x^4}{(1-x)(1-p^2x^2)(1-p^3x^3)(1-px^2)}.
\end{equation}
\textbf{CASE 2 :} $r  < l$

\

In this case we can simplify (\ref{main equation}) to $ k+l-2r \leq v(2p^{l-r}a_{32} -a_{31})$.

\

\textbf{SUBCASE 1:} $r < k$

\

Now if $ r < k$ this holds exactly when $a_{31} = p^{l-r}y$ for some $y \in \Z_p$ and $y \equiv 2a_{32} \text{ mod } p^{k-r}$. This implies that $\mu_p(k,l,r) = p^{2r-k-l}$.
In this case our inequalities reduce to the following:
\[
r  < l \leq 2r \text{ and } r < k \leq l+r.
\]
Note that $k$, $l$ and  $r$ cannot be $0$. So we have
\[
F_3(s) = \sum_{\substack{r  < l \leq 2r \\ r < k \leq l+r }}  \frac{p^{2k+l}}{p^{(k+l+r)s}} \mu_p(k,l,r)
\]
\[
= \sum_{\substack{r  < l \leq 2r \\ r < k \leq l+r }} \frac{p^{2r+k}}{p^{(k+l+r)s}}
\] 
\[
= \sum_{r = 1}^{\infty} (p^2x)^r \sum_{l=r+1}^{2r} x^l \sum_{k=r+1}^{r+l}(px)^k
\]
\[
\sum_{r = 1}^{\infty} (p^2x)^r \sum_{l=r+1}^{2r} x^l\bigg(\frac{(px)^{r+1}(1-(px)^l)}{1-px}\bigg)
\]
\[
=\frac{px}{1-px}\sum_{r = 1}^{\infty}(p^3x^2)^r\sum_{l=r+1}^{2r}\bigg(x^l - (px^2)^l\bigg)
\]
\[
=\frac{px}{1-px}\sum_{r = 1}^{\infty}(p^3x^2)^r\bigg(\frac{x^{r+1}(1-x^r)}{1-x} - \frac{(px^2)^{r+1}(1-(px^2)^r)}{1-px^2}\bigg)
\]
\[
= \frac{px^2}{(1-px)(1-x)}\sum_{r = 1}^{\infty}\bigg((p^3x^3)^r - (p^3x^4)^r\bigg) - \frac{p^2x^3}{(1-px)(1-px^2)}\sum_{r = 1}^{\infty}\bigg((p^4x^4)^r - (p^5x^6)^r\bigg)
\]
\[
=\frac{px^2}{(1-px)(1-x)}\bigg(\frac{p^3x^3}{1-p^3x^3}- \frac{p^3x^4}{1-p^3x^4}\bigg) - \frac{p^2x^3}{(1-px)(1-px^2)}\bigg(\frac{p^4x^4}{1-p^4x^4} - \frac{p^5x^6}{1-p^5x^6}\bigg)
\]
\[
= \frac{p^4x^5}{(1-px)(1-p^3x^3)(1-p^3x^4)} - \frac{p^6x^7}{(1-px)(1-p^4x^4)(1-p^5x^6)}
\]
\[
= \frac{p^4x^5\big((1-p^4x^4)(1-p^5x^6) - p^2x^2(1-p^3x^3)(1-p^3x^4)\big)}{(1-px)(1-p^3x^3)(1-p^3x^4)(1-p^4x^4)(1-p^5x^6)}.
\]

\

Some easy manipulation gives 

\begin{equation}
F_3(s) = \frac{p^4x^5(1+px-p^4x^4 - p^8x^9)}{(1-p^3x^3)(1-p^3x^4)(1-p^4x^4)(1-p^5x^6)}. \label{sum 3}
\end{equation}

\

\textbf{SUBCASE 2:} $r \geq k$

\

Here we further look at 2 subcases. If $k+l \leq 2r$ then we have that (\ref{main equation}) always holds. In that case $\mu_p(k,l,r) = 1$. On the other hand if $k+l > 2r$ then $\mu_p(k,l,r) = p^{2r-k-l}$.

\

\textbf{SUBCASE 2a :} $k+l > 2r$  

\

Our inequalities are  $k \leq r  < l \leq 2r$ and $k+l \leq 2r$. We have 
\[
F_4(s) = \sum_{\substack{0 \leq k \leq r < l \leq 2r, k+l > 2r}} = \frac{p^{2k+l}}{p^{(k+l+r)s}} \mu_p(k,l,r)
\]
\[
= \sum_{\substack{0 \leq k \leq r < l \leq 2r, k+l > 2r}} \frac{p^{2r+k}}{p^{(k+l+r)s}}
\] 
\[
= \sum_{r=1}^{\infty} (p^2x)^r \sum_{l=r+1}^{2r} x^l \sum_{k=2r-l+1}^{r} (px)^k
\]
\[
=  \sum_{r=1}^{\infty} (p^2x)^r \sum_{l=r+1}^{2r} x^l \bigg( \frac{(px)^{2r-l+1}(1-(px)^{l-r})}{1-px}\bigg)
\]
\[
= \frac{px}{1-px} \sum_{r=1}^{\infty} (p^4x^3)^r \sum_{l=r+1}^{2r}\bigg( p^{-l} - (px)^{-r}x^l \bigg) 
\]
\[
= \frac{px}{1-px} \sum_{r=1}^{\infty} (p^4x^3)^r \bigg( \frac{p^{-r-1}(1-p^{-r})}{1-p^{-1}} - (px)^{-r}\bigg( \frac{x^{r+1}(1-x^r)}{1-x}\bigg)\bigg)
\]

\[
=\frac{x}{(1-px)(1-p^{-1})}\sum_{r =1}^{\infty}\bigg((p^3x^3)^r- (p^2x^3)^r\bigg) - \frac{px^2}{(1-px)(1-x)}\sum_{r =1}^{\infty}\bigg((p^3x^3)^r -(p^3x^4)^r\bigg)
\]
\[
= \frac{x}{(1-px)(1-p^{-1})}\bigg(\frac{p^3x^3}{1-p^3x^3} - \frac{p^2x^3}{1-p^2x^3}\bigg) - \frac{px^2}{(1-px)(1-x)}\bigg(\frac{p^3x^3}{1-p^3x^3} - \frac{p^3x^4}{1-p^3x^4}\bigg)
\]
\[
= \frac{p^3x^4}{(1-px)(1-p^3x^3)(1-p^2x^3)} - \frac{p^4x^5}{(1-px)(1-p^3x^3)(1-p^3x^4)}
\]
\[
= \frac{p^3x^4}{(1-px)(1-p^3x^3)}\bigg(\frac{1}{1-p^2x^3} - \frac{px}{1-p^3x^4}\bigg). 
\]

We obtain 

\begin{equation}
F_4(s) = \frac{p^3x^4}{(1-p^3x^3)(1-p^3x^4)(1-p^2x^3)} \label{sum 4}.
\end{equation}

\

\textbf{SUBCASE 2b :} $k+l \leq 2r$ 

\

We have 

\[
F_5(s) = \sum_{\substack{0 \leq k \leq r < l \leq 2r \\ k+l  \leq 2r}} \frac{p^{2k+l}}{p^{(k+l+r)s}} \mu_p(k,l,r)
\]
\[
= \sum_{\substack{0 \leq k \leq r < l \leq 2r \\ k+l  \leq 2r}} \frac{p^{2k+l}}{p^{(k+l+r)s}}
\]
\[
= \sum_{r=1}^{\infty}x^r \sum_{r+1}^{2r}(px)^l\sum_{k =0}^{2r-l}(p^2x)^k
\]
\[
= \sum_{r=1}^{\infty}x^r \sum_{r+1}^{2r}(px)^l \bigg(\frac{1-(p^2x)^{2r-l+1}}{1-p^2x}\bigg)
\]
\[
= \frac{1}{1-p^2x} \sum_{r=1}^{\infty}x^r \sum_{r+1}^{2r}\bigg((px)^l - (p^2x)^{2r+1}p^{-l}\bigg)
\]
\[
=\frac{1}{1-p^2x} \sum_{r=1}^{\infty}x^r\bigg(\frac{(px)^{r+1}(1-(px)^r)}{1-px} - (p^2x)^{2r+1}\bigg(\frac{p^{-r-1}(1-p^{-r})}{1-p^{-1}}\bigg)\bigg)
\]
\[
\frac{px}{(1-p^2x)(1-px)} \sum_{r=1}^{\infty}\bigg((px^2)^r - (p^2x^3)^r\bigg) - \frac{px}{(1-p^2x)(1-p^{-1})}\sum_{r=1}^{\infty} \bigg((p^3x^3)^r - (p^2x^3)^r\bigg)
\]
\[
= \frac{px}{(1-p^2x)(1-px)}\bigg(\frac{px^2}{1-px^2} - \frac{p^2x^3}{1-p^2x^3}\bigg) - \frac{px}{(1-p^2x)(1-p^{-1})}\bigg( \frac{p^3x^3}{1-p^3x^3} - \frac{p^2x^3}{1-p^2x^3}\bigg)
\]
\[
= \frac{p^2x^3}{(1-p^2x)(1-px^2)(1-p^2x^3)} - \frac{p^4x^4}{(1-p^2x)(1-p^3x^3)(1-p^2x^3)}
\]
\[
= \frac{p^2x^3}{(1-p^2x)(1-p^2x^3)}\bigg(\frac{1}{1-px^2} - \frac{p^2x}{1-p^3x^3}\bigg). 
\]

We obtain 

\begin{equation}
F_5(s)= \frac{p^2x^3}{(1-px^2)(1-p^2x^3)(1-p^3x^3)}. \label{sum 5}
\end{equation}

\

Summing up equations (\ref{sum 1})-(\ref{sum 5}) gives 
\[
\zeta_{R,p}(s)  = \frac{1+px^2+p^2x^3-p^4x^5-2p^5x^6-2p^6x^7-p^7x^8+p^9x^{10}+ p^{10}x^{11}+p^{11}x^{13} }{(1-p^5x^6)(1-p^4x^4)(1-p^3x^3)(1-p^2x^2)(1-x)}
\]
with $x = p^{-s}$.  It is now easy to see that 
$$
\prod_{p \ne 2} \zeta_{R, p}(s)
$$
has a simple pole at $s =3/2$ and otherwise it is holomorphic on a domain containing $\Re s \geq 3/2$. In order to prove the theorem we need to show that $\zeta_{R, 2}(s)$ is holomorphic on a domain containing $\Re s \geq 3/2$. We do this next.

\subsection{The case where $p=2$} Here too we consider two cases $r \geq l $ and $r < l$.

\

\textbf{CASE 1 :} $r  \geq l$

\

In this case we can simplify (\ref{main equation}) to $ k-r \leq v(2a_{32} -2^{r-l}a_{31})$. Now if $r \geq k$ this always holds so that in that case $\mu_2(k,l,r) = 1$. if $ r < k$ there are two further subcases. If $r > l$ then this holds exactly when $a_{32} = 2^{r-l-1}y$ for some $y \in \Z_2$ such that $y \equiv a_{31} \text{ mod } 2^{k+l-2r}$. So that $\mu_2(k,l,r) = 2^{l-r-1}.2^{2r-l-k} = 2^{r-k+1}$ when $r > l$. If $r =l$  then the congruence reduces to $k-r \leq v(2a_{32} - a_{31})$ which holds on a volume of  $\mu_2(k,l,r) = 2^{r-k}$. 

\

\textbf{SUBCASE 1:} $r < k$

\

\textbf{SUBCASE 1a :} $ r > l$

\

Our inequalities reduce to the following:
\[
r < k \leq l+r \text{ and } r > l.
\]

Note that $k$ and $l$ cannot be 0 and $\mu_2(k,l,r) = p^{r-k}$ from the above argument. So we have
\begin{equation*}
\begin{split}
T_1(s) = & \sum_{\substack{ r < k \leq l+r, \\ 1 \leq l < r}} \frac{p^{2k+l}}{p^{(k+l+r)s}} \mu_2(k,l,r) \\
   &  = \sum_{\substack{ r < k \leq l+r, \\ 1 \leq l < r}} \frac{p^{2k+l}.p^{r-k+1}}{p^{(k+l+r)s}} \\
   & =p\sum_{l=1}^{\infty} (px)^l \sum_{r=l+1}^{\infty} (px)^r \sum_{k = r+1}^{r+l} (px)^k \\
   & = p\sum_{l=1}^{\infty} (px)^l \sum_{r=l+1}^{\infty} (px)^r\bigg(\frac{(px)^{r+1}(1-(px)^l)}{1-px}\bigg) \\
   &   =\frac{p}{1-px}\sum_{l=1}^{\infty} (px)^l \sum_{r=l+1}^{\infty} (px)^r\bigg((px)^{r+1}(1-(px)^l)\bigg)\\
  &  =\frac{p^2x}{1-px}\sum_{l=1}^{\infty} (px)^l \sum_{r=l+1}^{\infty}\bigg((px)^{2r} -(px)^{2r+l}\bigg)\\
  &  =\frac{p^2x}{1-px}\sum_{l=1}^{\infty} (px)^l\bigg(\frac{(px)^{2l+2}}{1-p^2x^2} - \frac{(px)^{3l+2}}{1-p^2x^2}\bigg) \\
  &  = \frac{p^2x}{(1-px)(1-p^2x^2)}\sum_{l=1}^{\infty}\bigg((px)^{3l+2} - (px)^{4l+2}\bigg) \\
  &  =\frac{p^2x}{(1-px)(1-p^2x^2)}\bigg(\frac{p^5x^5}{1-p^3x^3} - \frac{p^6x^6}{1-p^4x^4}\bigg).
\end{split}
\end{equation*}
Simplifying gives
\begin{equation}
T_1(s) = \frac{p^7x^6}{(1-p^2x^2)(1-p^3x^3)(1-p^4x^4)}. 
\end{equation}

\textbf{SUBCASE 1b :} $ r = l$

\

Now if $r = l$, then $ l < k \leq 2l$ and $\mu_2(k,l,r) = p^{r-k}$. So that our sum is 
\[
T_2(s) = \sum_{l=1}^{\infty} (px)^{2l} \sum_{k=l+1}^{2l}(px)^k
\]
\[
= \frac{1}{1-px}\sum_{l=1}^{\infty} (px)^{2l}(px)^{l+1}(1-(px)^l)
\]
\[
= \frac{px}{1-px} \sum_{l=1}^{\infty} \big((px)^{3l} - (px)^{4l}\big)
\]
\[
= \frac{px}{1-px}\bigg(\frac{p^3x^3}{1-p^3x^3} - \frac{p^4x^4}{1-p^4x^4}\bigg)
\]
\begin{equation}
T_2(s) = \frac{p^4x^4}{(1-p^3x^3)(1-p^4x^4)}
\end{equation}

\

\textbf{SUBCASE 2} $r \geq k$

\

We need to find 

\[
T_3(s) = \sum_{\substack{ r \geq k, \\ r \geq l}} \frac{p^{2k+l}}{p^{(k+l+r)s}} \mu_p(k,l,r)
\]

Here  $\mu_2(k,l,r) = 1$ and the computation is exactly the same as when $ p \neq 2$. So we have,

\begin{equation} 
T_3 (s) = F_2(s) = \frac{1-p^3x^4}{(1-x)(1-p^2x^2)(1-p^3x^3)(1-px^2)}.
\end{equation}
 
 \

\textbf{CASE 2 :} $r  < l$

\

In this case we can simplify (\ref{main equation}) to $ k+l-2r \leq v(2^{l-r+1}a_{32} -a_{31})$.

\

\textbf{SUBCASE 1:} $r < k$

\

Now if $ r < k$ this holds exactly when $a_{31} = 2^{l-r+1}y$ for some $y \in \Z_2$ such that $y \equiv a_{32} \text{ mod } 2^{k-r-1}$. This implies that $\mu_2(k,l,r) = 2^{2r-k-l}$.
In this case our inequalities reduce to the following:
\[
r  < l \leq 2r \text{ and } r < k \leq l+r.
\]
Note that $k$, $l$ and  $r$ cannot be $0$. In this case our sum is 
\[
T_4(s) = \sum_{\substack{r  < l \leq 2r \\ r < k \leq l+r }}  \frac{p^{2k+l}}{p^{(k+l+r)s}} \mu_p(k,l,r)
\]

Again the computation is identical to the one where $ p \neq 2$. So that
\begin{equation}
T_4(s)= F_3(s) =  \frac{p^4x^5(1+px-p^4x^4 - p^8x^9)}{(1-p^3x^3)(1-p^3x^4)(1-p^4x^4)(1-p^5x^6)}. 
\end{equation}

\

\textbf{SUBCASE 2:} $r \geq k$

\

Here we further look at 2 subcases. If $k+l \leq 2r$ then we have that (\ref{main equation}) always holds. In that case $\mu_2(k,l,r) = 1$. On the other hand if $k+l > 2r$ then $\mu_2(k,l,r) = 2^{2r-k-l}$. In either case the computations are the same as when $ p \neq 2$.

\

\textbf{SUBCASE 2a :} $k+l > 2r$  

\

Our inequalities are  $k \leq r  < l \leq 2r$ and $k+l \leq 2r$. Our sum here is 
\[
T_5(s) = \sum_{\substack{0 \leq k \leq r < l \leq 2r, k+l > 2r}} = \frac{p^{2k+l}}{p^{(k+l+r)s}} \mu_p(k,l,r)
\]
So that
\begin{equation}
T_5(s) = F_4(s) = \frac{p^3x^4}{(1-p^3x^3)(1-p^3x^4)(1-p^2x^3)}. 
\end{equation}

\

\textbf{SUBCASE 2b :} $k+l \leq 2r$ 

\

Here $\mu_2(k,l,r) = 1$.  We have 

\[
T_6(s) = \sum_{\substack{0 \leq k \leq r < l \leq 2r \\ k+l  \leq 2r}} \frac{p^{2k+l}}{p^{(k+l+r)s}} \mu_2(k,l,r)
\]
And again
\begin{equation}
T_6(s)= F_5(s) = \frac{p^2x^3}{(1-px^2)(1-p^2x^3)(1-p^3x^3)}. 
\end{equation}

\

Summing up the contributions from the $T_i(s)$ we get

\begin{equation}
\zeta_{R,2}(s) = \frac{\splitfrac{\bigg(2^{12}x^{13}-2^{12}x^{12} + 2^{11}x^{12} + 2^{10}x^{11} + 2^9x^{10} - 2^7x^8 - 2^7x^7}{ + 2^7x^6 - 2^6x^7 - 2^6x^6 - 2^5x^6 - 2^4x^5 + 2^2x^3 + 2x^2 + 1\bigg)}}{(1-x)(1-2x^2)(1-2^4x^4)(1-2^3x^3)(1-2^5x^6)}. 
\end{equation}

\

It is now obvious that $\zeta_{R. 2}(s)$ is holomorphic on a domain containing $\Re s \geq 3/2$, and one can compute its value at $s = 3/2$. 
Now that the local factors of the zeta function are known the rest of the statements of Theorem \ref{thm-subring-zeta} are easy consequences of a standard Tauberian theorem, and basic properties of the Riemann zeta function, plus some rather tedious computations. 

\section{Coyclic Subrings}\label{cocyclic-rings}
In this section we prove Theorem \ref{thm:cocyclic}. 
We start with some definitions.  
\begin{deff}
The \textit{cotype} of a sublattice  $ \Lambda \in \Z^n$ is defined as follows. 
By elementary divisor theory, there is a unique $n$-tuple of integers $(\alpha_1, \dots, \alpha_n) = (\alpha_1(\Lambda), \dots, \alpha_n(\Lambda))
$ such that the finite abelian group $\Z^n/\Lambda$ is isomorphic to the sum of cyclic groups. 
\[
(\Z/\alpha_1\Z) \oplus (\Z/\alpha_2\Z) \oplus \cdots \oplus (\Z/\alpha_n\Z)
\]
where $\alpha_{i+1}|\alpha_i$ for $1 \leq i \leq n-1$. The $n$-tuple $(\alpha_1(\Lambda), \dots, \alpha_n(\Lambda))$ is called the \textit{cotype} of $\Lambda$. The largest index $i$ for which $\alpha_i \neq 1$ is called the \textit{corank} of $\Lambda$. A sublattice $\Lambda$ of corank $0$ or $1$ is called cocyclic, i.e., when $\Z^n/\Lambda$ is cyclic abelian group. 
\end{deff}
From now on set $R = \Z[t]/(t^4)$. A subring $S$ of $R$ is said to be cocyclic if it is cocyclic as an additive  sublattice of $R$. Define $a_R^{cc}(k)$ to be the number of cocyclic subrings of $R$ of index $k$ then the subring zeta function that counts these is given by
\[
\zeta_{\Z[t]/(t^4)}^{cc}(s)  = \sum_{n=1}^{\infty} \frac{a_R^{cc}(k)}{k^s}
\]
It follows from lemma 1.1 in \cite{Petrogadsky} that there is an Euler Product for $\zeta_{\Z[t]/(t^4)}^{cc}(s)$ given by 
\begin{equation*}
\zeta_{\Z[t]/(t^4)}^{cc}(s) = \prod_p \zeta_{p,k}^{cc}(s)
\end{equation*}
where
\[
\zeta_{p,k}^{cc}(s) = \sum_{m=0}^{\infty} \frac{a_{R}^{cc}(p^m)}{p^{ms}}.
\]
The following two propositions allow us to compute $\zeta_{p,k}^{cc}(s)$ by counting a certain class of matrices.  Proposition  \ref{cocyclic} is a consequence of Proposition 8.1 in \cite{MR}, and Propostion \ref{Hermite} is  proposition 2.1 from  \cite{Liu}.

\begin{prop} \label{cocyclic}
If $S$ is a  sublattice of $\Z^n$ of full rank generated by the rows of a matrix $M$ then $R/S \cong (\Z/\alpha_1\Z) \oplus (\Z/\alpha_2\Z) \oplus \cdots \oplus(\Z/\alpha_n\Z)$ where $\alpha_{i+1} \mid \alpha_i$ and $\alpha_{n-k+1}\alpha_{n-k+2} \cdots \alpha_n$ is equal to the gcd of all $k \times k$ minors of $M$, with the convention that if all $k \times k$ minors are $0$, then their gcd is $0$.
\end{prop}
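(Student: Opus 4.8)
The plan is to deduce the statement from the theory of Smith normal form together with the classical identification of products of invariant factors with the greatest common divisors of minors (the determinantal divisors). First I would invoke the Smith normal form: since $M$ is an integer matrix of full rank, there exist $U, V \in \mathrm{GL}_n(\Z)$ with $UMV = \mathrm{diag}(d_1, \ldots, d_n)$, where $d_1 \mid d_2 \mid \cdots \mid d_n$ are the invariant factors. Left multiplication by $U$ is a sequence of integral row operations and hence preserves the row lattice $S$, while right multiplication by $V$ is an automorphism of $\Z^n$ carrying $S$ isomorphically onto a sublattice of an isomorphic copy of $\Z^n$; either way $\Z^n/S \cong \bigoplus_{i=1}^n \Z/d_i\Z$. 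Setting $\alpha_i = d_{n-i+1}$ reverses the divisibility to the stated $\alpha_{i+1}\mid \alpha_i$ and reproduces the claimed direct sum decomposition.

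It then remains to show that the $k$-th determinantal divisor $D_k(M)$, defined as the gcd of all $k\times k$ minors of $M$, equals $d_1 d_2 \cdots d_k$; under the relabeling $\alpha_i = d_{n-i+1}$ this product is exactly $\alpha_{n-k+1}\alpha_{n-k+2}\cdots\alpha_n$, which is the assertion. The key step is that $D_k$ is invariant under multiplication by an element of $\mathrm{GL}_n(\Z)$ on either side. This I would derive from the Cauchy--Binet formula: every $k\times k$ minor of $UM$ is a $\Z$-linear combination of $k\times k$ minors of $M$, so $D_k(M) \mid D_k(UM)$, and applying the same reasoning to $U^{-1}$ yields equality; the argument for right multiplication by $V$ is identical. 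Hence $D_k(M) = D_k(\mathrm{diag}(d_1,\ldots,d_n))$, so the gcd of the minors of $M$ can be computed on its Smith normal form instead.

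Finally I would compute $D_k$ of the diagonal matrix directly. Its nonzero $k\times k$ minors are precisely the products $d_{i_1} d_{i_2}\cdots d_{i_k}$ over index sets $\{i_1 < \cdots < i_k\}$, and because $d_1 \mid d_2 \mid \cdots \mid d_n$ every such product is divisible by $d_1 d_2 \cdots d_k$, which is itself one of these minors; thus the gcd is $d_1 d_2 \cdots d_k$. Combining the three steps gives $\alpha_{n-k+1}\cdots \alpha_n = D_k(M)$, as desired. The degenerate convention (all $k\times k$ minors zero, gcd declared $0$) never intervenes here, since full rank of $M$ forces $D_n = |\det M| \neq 0$ and hence $D_k \neq 0$ for every $k \leq n$. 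The only genuine content, and the step I expect to require the most care, is the Cauchy--Binet invariance, as it is precisely what permits replacing the arbitrary generating matrix $M$ by its Smith normal form without altering the gcd of its minors.
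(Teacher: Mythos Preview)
Your argument is correct: this is precisely the classical derivation of the invariant-factor decomposition of $\Z^n/S$ together with the identification of the determinantal divisors $D_k(M)$ with the partial products $d_1\cdots d_k$ of the Smith normal form entries, and the relabeling $\alpha_i=d_{n-i+1}$ matches the statement exactly. The Cauchy--Binet step showing $D_k$ is invariant under left or right multiplication by elements of $\mathrm{GL}_n(\Z)$ is the right tool, and your closing remark that full rank forces every $D_k\neq 0$ is justified (rank $\geq k$ is equivalent to the existence of a nonzero $k\times k$ minor).

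By way of comparison, the paper does not supply its own proof of this proposition at all: it simply records the result as a consequence of Proposition~8.1 of Miller--Reiner \cite{MR}. Your write-up is the standard self-contained textbook argument and is in fact what underlies the cited result, so there is no substantive divergence of method---you have filled in what the paper leaves to the literature.
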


\begin{prop} \label{Hermite}
There is a bijection between lattices $ L \subset \Z^n$ of index $k$ and $n \times n$ lower triangular matrices 
\begin{equation*}
\begin{bmatrix}
a_{11}   &  0   &   \cdots   &  0   \\
a_{21}    & a_{22}    &    \cdots   &    0 \\
\vdots    &   \vdots    &    \ddots     &    0 \\\
a_{n1}   &    a_{n2}    &    \cdots    
\end{bmatrix}
\end{equation*}
with determinant k such that $0 \leq a_{ij} < a_{jj}, for 1 \leq j < i \leq n$. 

\end{prop}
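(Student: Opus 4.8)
The plan is to prove this as the standard existence-and-uniqueness statement for the \emph{Hermite normal form} of a finite-index sublattice. First I would record the two basic facts that frame the problem: a sublattice $L \subset \Z^n$ has finite index if and only if it has full rank $n$, and any two integer matrices whose rows span the same lattice differ by left multiplication by an element of $\mathrm{GL}_n(\Z)$; moreover the index $[\Z^n : L]$ equals the absolute value of the determinant of any such basis matrix. Thus the claim is exactly that each $\mathrm{GL}_n(\Z)$-orbit of integer matrices with $|\det| = k$ contains a unique lower-triangular matrix with positive diagonal entries satisfying $0 \le a_{ij} < a_{jj}$, and that this representative has determinant (the product of its diagonal entries) equal to $k$.

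For \textbf{existence} I would argue by induction on $n$, the case $n=1$ being the statement that a finite-index subgroup of $\Z$ is $k\Z$, generated by the $1 \times 1$ matrix $[k]$. For the inductive step, let $\pi \colon \Z^n \to \Z$ be the projection onto the last coordinate. Then $\pi(L) = a_{nn}\Z$ with $a_{nn} > 0$, and $L_0 := L \cap \ker\pi$ is a full-rank sublattice of $\ker\pi \cong \Z^{n-1}$. Pick any $v_n \in L$ with $\pi(v_n) = a_{nn}$; by the inductive hypothesis $L_0$ has a lower-triangular reduced basis $v_1, \dots, v_{n-1}$, and $v_1, \dots, v_n$ then generate $L$. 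Since each $v_j$ for $j < n$ is supported on coordinates $1, \dots, j$, subtracting integer multiples of $v_{n-1}, v_{n-2}, \dots, v_1$ from $v_n$ in this order reduces the off-diagonal entries $a_{n,n-1}, \dots, a_{n1}$ into the ranges $[0, a_{jj})$ without disturbing entries already reduced. This produces a matrix of the required shape.

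For \textbf{uniqueness} I would run the same induction. The diagonal entry $a_{nn}$ is forced, being the positive generator of $\pi(L)$; this also pins down $L_0 = L \cap \ker\pi$, whose reduced basis $v_1, \dots, v_{n-1}$ is unique by the inductive hypothesis, so the first $n-1$ rows are determined. Any admissible last row $v_n$ satisfies $\pi(v_n) = a_{nn}$, so two choices differ by an element of $L_0$; but the constraints $0 \le a_{nj} < a_{jj}$ force the $L_0$-component to vanish, whence $v_n$ is determined as well. Taken together, existence and uniqueness supply mutually inverse maps between index-$k$ lattices and the matrices described, since a lower-triangular matrix with positive diagonal has determinant $\prod_i a_{ii} = [\Z^n : L] = k$.

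The routine parts are the projection/reduction algorithm and the index-equals-determinant identity, both of which are standard. The part demanding the most care is establishing that the reduction is genuinely \emph{canonical}: one must perform the off-diagonal reductions in the correct order (from the subdiagonal inward) so that reducing one entry never re-enlarges an entry already placed in its residue range, and one must verify in the uniqueness step that the residue conditions leave no freedom in the final row. This bookkeeping, rather than any conceptual difficulty, is the main obstacle.
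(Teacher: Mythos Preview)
Your argument is correct and is the standard inductive proof of existence and uniqueness of the (lower-triangular) Hermite normal form: project onto the last coordinate to peel off the bottom row, apply the inductive hypothesis to $L_0 = L \cap \ker\pi$, and then reduce the off-diagonal entries of the last row column by column from right to left. The bookkeeping you flag at the end is exactly right, and your uniqueness step (two admissible last rows differ by an element of $L_0$, then kill the coefficients one at a time using the residue constraints) is clean.

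As for comparison with the paper: there is nothing to compare. The paper does not prove this proposition at all; it simply quotes it as Proposition~2.1 of Liu \cite{Liu} and moves on. So you have supplied a self-contained proof where the paper defers to the literature. What you have written is essentially the argument one would find behind that citation, so there is no divergence in method---you are just filling in what the authors chose to black-box.
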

\begin{prop} 
There is a bijection between cocyclic subrings $S$ of  $\Z[t]/(t^4)$ of index $p^{k+l+r}$  and $4 \times 4$ upper triangular matrices $ M \in M_4(\Z)$ 
\[
M = 
\begin{bmatrix}
p^k   &    0 &    0 &    0 \\
a_{21}  &  p^l   & 0 &   0 \\
a_{31}   &   a_{32} &  p^r  & 0 \\ 
0 &   0 &    0 &    1
\end{bmatrix}
\]
such that 
\begin{subequations} 
\begin{equation} \label{Hermite k}
0 \leq a_{21}, a_{31} < k;
\end{equation}
\begin{equation} \label{Hermite l}
0 \leq a_{32} < l;
\end{equation}
\begin{equation}
0 \leq l \leq 2r;
\end{equation}
\begin{equation}
0 \leq k \leq l+r;
\end{equation}
\begin{equation} \label{main}
p^{r-l-k}(2p^la_{32} - p^ra_{21}) \in \Z;
\end{equation}
\begin{equation}\label{3 minor}
\gcd \big(p^{k+l+r}, p^{l+r}, p^ra_{21}, (a_{21}a_{32}-p^la_{31}), p^{k+r}, p^ka_{32}, p^{k+l}\big) =  1.
\end{equation}
\end{subequations}
\end{prop}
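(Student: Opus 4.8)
The plan is to establish this bijection by combining the three quoted results—Proposition \ref{cocyclic} (cotype via gcd of minors), Proposition \ref{Hermite} (Hermite normal form), and Lemma \ref{inequalities} (the multiplicativity inequalities)—and showing that cocyclicity translates precisely into condition \eqref{3 minor}. First I would start from Proposition \ref{Hermite}, which already gives a bijection between \emph{sublattices} $L \subset \Z^4$ of index $p^{k+l+r}$ and lower-triangular matrices in Hermite normal form. The diagonal entries of such a matrix, being prime powers multiplying to the index, must be $p^k, p^l, p^r, 1$ up to ordering; the normalization $0 \le a_{ij} < a_{jj}$ forces the off-diagonal entries in the columns with diagonal $1$ to vanish, which is why the matrix has the displayed shape with the last row equal to $(0,0,0,1)$. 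This simultaneously explains the strict inequalities \eqref{Hermite k} and \eqref{Hermite l}, since $a_{jj} = p^k$ in the first column and $p^l$ in the second (I read the bounds ``$< k$'' and ``$< l$'' as shorthand for $< p^k$ and $< p^l$).

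Next I would impose the requirement that $S$ be a \emph{subring}, not merely a sublattice. This is exactly the content that was worked out in the proof of Lemma \ref{inequalities}: closure under multiplication in $\Z[t]/(t^4)$ forces the product of each pair of generating rows to lie in $S$, and equating coefficients yields precisely the inequalities \eqref{equation 1}, \eqref{equation 2}, and the divisibility \eqref{main equation}. I would transcribe these into the present context, noting that over $\Z$ (rather than $\Z_p$) the valuation condition \eqref{main equation} becomes the integrality statement \eqref{main}, i.e. $p^{r-l-k}(2p^l a_{32} - p^r a_{21}) \in \Z$. At this stage the matrix ranges exactly over lower-triangular Hermite forms that define subrings of the correct index.

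The genuinely new ingredient is condition \eqref{3 minor}, which encodes cocyclicity. Here I would invoke Proposition \ref{cocyclic} with $n = 4$: the product of the top two elementary divisors $\alpha_3 \alpha_4$ equals the gcd of all $2 \times 2$ minors of $M$, and $R/S$ is cyclic exactly when $\alpha_2 = \alpha_3 = \alpha_4 = 1$, equivalently when $\alpha_3 \alpha_4 = 1$, equivalently when the gcd of all $2 \times 2$ minors is $1$. The task is then to compute the $2 \times 2$ minors of the displayed matrix and check that, after discarding those that are units or are divisible by others already in the list, the gcd reduces to the six entries appearing inside \eqref{3 minor}: namely $p^{k+l+r}$, $p^{l+r}$, $p^r a_{21}$, $a_{21}a_{32} - p^l a_{31}$, $p^{k+r}$, $p^k a_{32}$, and $p^{k+l}$. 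I would enumerate the $\binom{4}{2}^2 = 36$ minors, use the fact that the last row is $(0,0,0,1)$ to see that minors involving row $4$ recover single diagonal-times-$1$ entries, and collect the surviving generators.

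The main obstacle I anticipate is the bookkeeping in this last step: verifying that the listed seven quantities really do generate the same gcd-ideal as the full set of $2\times 2$ minors, i.e. that no minor has been omitted that could lower the gcd and that each listed generator is indeed a minor (or a gcd of minors). In particular the ``mixed'' minor $a_{21}a_{32} - p^l a_{31}$ must be identified as the determinant of the submatrix on rows $2,3$ and columns $1,2$, and one must confirm that the interaction between this term and the pure prime-power minors is correctly captured. I would also double-check consistency: the subring and cocyclicity conditions must be simultaneously satisfiable for the parameter ranges, and the resulting count must match the local zeta factor asserted in Theorem \ref{thm:cocyclic}, which serves as an independent sanity check on the enumeration.
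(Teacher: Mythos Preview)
Your overall architecture is exactly that of the paper: Hermite normal form (Proposition~\ref{Hermite}) for the sublattice bijection, Lemma~\ref{inequalities} for the ring conditions, and Proposition~\ref{cocyclic} for cocyclicity. However, there is a genuine error in how you invoke Proposition~\ref{cocyclic}. You assert that $R/S$ is cyclic iff $\alpha_3\alpha_4 = 1$, i.e.\ iff the gcd of the $2\times 2$ minors of $M$ is $1$. This equivalence is false: $\alpha_3 = \alpha_4 = 1$ does \emph{not} force $\alpha_2 = 1$. For instance, with $k=l=r=1$, $a_{21}=1$, $a_{31}=a_{32}=0$ one checks that conditions \eqref{Hermite k}--\eqref{main} all hold and the $2\times 2$ minor on rows $\{2,4\}$, columns $\{1,4\}$ equals $1$, yet the gcd of the $3\times 3$ minors is $p$, so $\alpha_2 = p$ and $R/S \cong \Z/p^2\Z \oplus \Z/p\Z$ is not cyclic. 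Your criterion would therefore overcount.

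The correct statement is that cocyclicity is equivalent to $\alpha_2\alpha_3\alpha_4 = 1$, which by Proposition~\ref{cocyclic} (with $k=3$, $n=4$) is the condition that the gcd of the $3\times 3$ minors be $1$; this is what the paper uses. If you carry out that computation you recover exactly the seven quantities in \eqref{3 minor}: the minor on rows and columns $\{1,2,3\}$ gives $p^{k+l+r}$, while each $3\times 3$ minor containing both row $4$ and column $4$ collapses (via the $(4,4)$-entry $1$ and the zeros elsewhere in row and column $4$) to a $2\times 2$ minor of the upper-left $3\times 3$ block, producing $p^{l+r}$, $p^r a_{21}$, $a_{21}a_{32}-p^l a_{31}$, $p^{k+r}$, $p^k a_{32}$, $p^{k+l}$; all remaining $3\times 3$ minors vanish. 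The confusion is understandable, since six of the seven listed quantities happen also to be $2\times 2$ minors of $M$---but the full set of $2\times 2$ minors contains extra terms (e.g.\ $a_{21}$ itself, from rows $\{2,4\}$ and columns $\{1,4\}$) that do not appear in \eqref{3 minor} and would have flagged the discrepancy had you completed the enumeration.
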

\begin{proof}
It follows from proposition \ref{cocyclic} that if $S$ is a subring of $R$ generated by the rows of a matrix $M$ then $S$ is cocyclic if and only if the gcd of the $3 \times 3$ minors of $M$ is $1$, and this is exactly equation (\ref{3 minor}). Equations  (\ref{Hermite k}) and (\ref{Hermite l}) are the conditions from Proposition \ref{Hermite} that give the bijection.  The remaining equations are multiplicativity conditions from Lemma \ref{inequalities}.
\end{proof}

We now proceed to prove Theorem \ref{thm:cocyclic}.  Let $c_{(k,l,r)}$ be the number of matrices that satisfy all the equations  (\ref{Hermite k}) - (\ref{3 minor}) for a fixed choice of $k,l,r$. Then $$a_R^{cc}(p^m) = \sum_{k+l+r = m} c_{(k,l,r)}$$ and 
\[
\zeta_{p,k}^{cc}(s) = \sum_{\substack{l \leq 2r \\ k \leq l+r}}^{\infty} \frac{c_{(k,l,r)}}{p^{(k+l+r)s}}.
\]
As in the previous section we do this in cases.  First we assume $p$ is odd, and then we treat the case of $p=2$ separately.  In each case we find $c_{(k,l,r)}$ and sum over all possible choices of $k,l$ and $r$. In what follows $ x = p^{-s}$.

\subsection{The case where $p\ne 2$}. We proceed by writing down cases. 

\

\textbf{CASE 1 : } $ k=0 $

\

In this case $a_{21}=a_{31} = 0$. If $l=0$ then we have a unique cocyclic subring corresponding to each choice of $r$ so that $c_{(0,0,r)} = 1$ for all $r$. If $l \geq 1$ then equation (\ref{3 minor}) reduces to $ p \nmid a_{32}$, giving  $(p^l - p^{l-1})$ choices for $M$ and $c_{(0,l,r)} = p^l - p^{l-1}$.  So that our sum becomes 
\[
\zeta_1(s) = \sum_{r =0}^{\infty} x^r + \sum_{r=1}^{\infty} x^r \sum_{l=1}^{2r}(p^l-p^{l-1})x^l
\]
\[
=  \frac{1}{1-x} + \frac{(p-1)x}{1-px} \sum_{r=1}^{\infty} x^r(1-(px)^{2r})
\]
\[
=\frac{1}{1-x} + \frac{(p-1)x}{1-px} \bigg(\frac{x}{1-x} - \frac{p^2x^3}{1-p^2x^3}\bigg)
\]
\[
=\frac{1}{1-x} + \frac{(p-1)(1+px)x^2}{(1-x)(1-p^2x^3)}.
\]
Simplification gives 
\begin{equation}
\zeta_1(s) = \frac{px^2+x+1}{1-p^2x^3}. 
\end{equation}

\

\textbf{CASE 2 : }$ k > 0, l = 0$ 

\

Note that we have $a_{32} = 0$ and $ k \leq r$ and all the conditions reduce to $p \nmid a_{31}$.  We have $p^k - p^{k-1}$ choices for $a_{31}$ and $p^k$ choices for $a_{21}$ which gives $c_{(k,0,r)} =  p^{2k} - p^{2k-1}$. So we have 
\[
\zeta_2(s) = \sum_{r=1}^{\infty} x^r \sum_{k=1}^{r} p^k(p^k-p^{k-1})x^k
\]
\[
= \sum_{r=1}^{\infty} x^r\sum_{k=1}^{r}(1-p^{-1})(p^2x)^k
\]
\[
=  \frac{p^2x(1-p^{-1})}{1-p^2x}\sum_{r=1}^{\infty} x^r(1-(p^2x)^r)
\]
\[
= \frac{p(p-1)x}{1-p^2x}\bigg(\frac{x}{1-x} - \frac{p^2x^2}{1-p^2x^2}\bigg).
\]

This gives

\begin{equation}
\zeta_2(s)= \frac{p(p-1)x^2}{(1-x)(1-p^2x^2)}.
\end{equation}

\

\textbf{CASE 3 : } $ k ,l,r \geq 1$ 

\

In this case (\ref{3 minor}) implies that both $a_{21}$ and $a_{32}$  are coprime to $p$.  Therefore (except in 3b) we have that $c_{(k,l,r)} = (p^{2k}-p^{2k-1})(p^l-p^{l-1})$) in all the following subcases: 

\

SUBCASE 1 : $ l < r$  

\

We can simplify (\ref{main}) as $p^{r-k}(2a_{32} - p^{r-l}a_{21}) \in \Z$. Since $ l < r$, this holds only when $k \leq r$. So our sum is 
\[
\zeta_3(s) = \sum_{r=2}^{\infty}x^r \sum_{k=1}^{r} (p^{2k}-p^{2k-1})x^k \sum_{l=1}^{r-1} (p^l - p^{l-1})x^l
\]
\[
= \frac{px(1-p^{-1})^2}{1-px} \sum_{r=2}^{\infty}x^r(1 -(px)^{r-1}) \sum_{k=1}^{r} (p^2x)^k
\]
\[
= \frac{p(p-1)^2x^2}{(1-px)(1-p^2x)}\sum_{r=2}^{\infty} (x^r - (px)^{-1}(px^2)^r)(1-(p^2x)^r)
\]
\[
= \frac{p(p-1)^2x^2}{(1-px)(1-p^2x)}\sum_{r=2}^{\infty}\bigg(x^r - (p^2x^2)^r - (px)^{-1}(px^2)^r + (px)^{-1} (p^3x^3)^r\bigg)
\]
\[
= \frac{p(p-1)^2x^2}{(1-px)(1-p^2x)}\bigg(\frac{x^2}{1-x} - \frac{p^4x^4}{1-p^2x^2} - \frac{px^3}{1-px^2} + \frac{p^5x^5}{1-p^3x^3}\bigg).
\]

We have 

\begin{equation}
\zeta_3(s) = \frac{p(p-1)^2x^4(-p^3x^3 -p^2x^2 + p^2x +1)}{(1-x)(1-p^2x^2)(1-p^3x^3)(1-px^2)}
\end{equation}

\

SUBCASE 2 : $ l > r $ 

\

In this case we can simplify (\ref{main}) to $p^{2r-k-l}(2p^{l-r}a_{32} - a_{21}) \in \Z$. This only holds when $k+l \leq 2r$. Our sum is 
\[
\zeta_4(s) = \sum_{r=2}^{\infty} x^r \sum_{l=r+1}^{2r-1} (p^l - p^{l-1})x^l \sum_{k=1}^{2r-l} (p^{2k} - p^{2k-1})x^k
\]
(Note that if $r = 1$ then $ l = 2$ which implies $ k = 0$ which was considered in a previous case.)
\[
= \frac{p^2x(1-p^{-1})^2}{1-p^2x}\sum_{r=2}^{\infty} x^r \sum_{l=r+1}^{2r-1} (px)^l (1 - p^{4r - 2l}x^{2r -l})
\]
\[
= \frac{(p-1)^2x}{1-p^2x}\sum_{r=2}^{\infty} x^r \sum_{l=r+1}^{2r-1} (px)^l - p^{4r-l}x^{2r}
\]
$$
=  \frac{(p-1)^2x}{(1-p^2x)(1-px)}\bigg(\sum_{r=2}^{\infty} x^r(px)^{r+1}(1-(px)^{r-1})\bigg) 
$$
$$
- \frac{p(p-1)x}{(1-p^2x)}\bigg(\sum_{r=2}^{\infty} x^r(p^4x^2)^r p^{-r-1}(1-p^{-r+1})\bigg)
$$
\[
=\frac{(p-1)^2x}{(1-p^2x)(1-px)}\bigg(\frac{p^3x^5}{1-px^2} - \frac{p^4x^6}{1-p^2x^3}\bigg) - \frac{p(p-1)x}{(1-p^2x)}\bigg(\frac{p^5x^6}{1-p^3x^3} - \frac{p^4x^6}{1-p^2x^3}\bigg).
\]

\

We have 

\begin{equation}
\zeta_4(s) = \frac{p^3(p-1)^2x^6}{(1-px^2)(1-p^3x^3)(1-p^2x^3)}
\end{equation}

\

SUBCASE 3 : $ l = r $ 

\

In this case (\ref{main}) reduces to $ p^{l-k}(2a_{32} -a_{21}) \in \Z$. 

\

SUBCASE 3a) : $k \leq l$ 

\

Equation (\ref{main})  holds for all $a_{32}, a_{21}$ satisfying (\ref{Hermite k}) and (\ref{Hermite l}). Our sum in this case is 
 \[
 \zeta_5(s) = \sum_{l=1}^{\infty} (p^l-p^{l-1})x^{2l} \sum_{k=1}^{l} (p^{2k}-p^{2k-1})x^k
 \]
 \[
 = \frac{p^2(1-p^{-1})^2x}{1-p^2x} \sum_{l=1}^{\infty} (px^2)^l(1-(p^2x)^l)
 \]
 \[
 = \frac{(p-1)^2x}{1-p^2x}\bigg(\frac{px^2}{1-px^2} - \frac{p^3x^3}{1-p^3x^3}\bigg).
 \]
 
 We obtain 
 
\begin{equation}
\zeta_5(s) = \frac{p(p-1)^2x^3}{(1-px^2)(1-p^3x^3)}.
\end{equation}

\

SUBCASE 3b) $ k > l$. 

\

We can rewrite (\ref{main}) as $p^{k-l} \mid 2a_{32} - a_{21}$. Fix $a_{32}$ coprime to $p$ such that $a_{32} < p^l$. There are $p^l - p^{l-1}$ choices for $a_{32}$. Since $a_{21}$ is congruent to $2a_{32}$ modulo $p^{k-l}$ that leaves $p^{k-(k-l)} = p^l$ choices for $a_{32}$. So that $c_{(k,l,r)} =  p^k(p^{2l}-p^{2l-1})$.
\[
\zeta_6(s) = \sum_{l=1}^{\infty} (p^{2l} - p^{2l-1})x^{2l} \sum_{k = l+1}^{2l} (px)^k
\]
\[
= \frac{1-p^{-1}}{1-px}  \sum_{l=1}^{\infty} (p^2x^2)^l(px)^{l+1}(1-(px)^l)
\]
\[
= \frac{1-p^{-1}}{1-px}\bigg(\frac{p^4x^4}{1-p^3x^3} - \frac{p^5x^5}{1-p^4x^4}\bigg). 
\]

We have 

\begin{equation}
\zeta_6(s) = \frac{p^3(p-1)x^4}{(1-p^3x^3)(1-p^4x^4)}. 
\end{equation}

\

Now $\zeta_{R, p}^{cc}(s) = \sum_{j=1}^{6} \zeta_j(s)$. Putting everything together we obtain 
\begin{equation*}
\zeta_{R, p}^{cc}(s) = \frac{-p^3x^4 + (p^2-p)x^2 + (1-p)x+1}{(1-px)(1-p^4x^4)}. 
\end{equation*}

It is easy to see that $\prod_{p \ne 2} \zeta_{R, p}^{cc}(s)$ is meromorphic on a domain containing $\Re s \geq 3/2-\delta$ for some $\delta > 0$ with a single simple pole at $s=3/2$. 
In order to apply the Tauberian theorem we need to show that $\zeta_{R, 2}(s)$ is holomorphic on a domain containing $\Re s \geq 3/2$ and compute its value at $s = 3/2$. We do this next. 

\subsection{The case where $p = 2$} We recognize several cases. 

\

\textbf{CASE 1 : } $ k=0 $

\

In this case $a_{21}=a_{31} = 0$. If $l=0$ then we have a unique cocyclic subring corresponding to each choice of $r$ so that $c_{(0,0,r)} = 1$ for all $r$. If $l \geq 1$ then equation (\ref{3 minor}) reduces to $ 2 \nmid a_{32}$, giving  $(2^l - 2^{l-1})$ choices for $M$ and $c_{(0,l,r)} = 2^l - 2^{l-1}$.  So that our sum becomes 
\[
E_1(s) = \sum_{r =0}^{\infty} x^r + \sum_{r=1}^{\infty} x^r \sum_{l=1}^{2r}(p^l-p^{l-1})x^l
\]
This is the the exact same sum as for $p \neq 2$. So that

\begin{equation}
E_1(s) = \frac{px^2+x+1}{1-p^2x^3}. 
\end{equation}

\

\textbf{CASE 2 : }$ k > 0, l = 0$ 

\

Note that we have $a_{32} = 0$ and $ k \leq r$ and all the conditions reduce to $2 \nmid a_{31}$.  We have $2^k - 2^{k-1}$ choices for $a_{31}$ and $2^k$ choices for $a_{21}$ which gives $c_{(k,0,r)} =  2^{2k} - 2^{2k-1}$. So we have that
\[
E_2(s) = \sum_{r=1}^{\infty} x^r \sum_{k=1}^{r} p^k(p^k-p^{k-1})x^k
\]
Again there is no difference between the odd prime case and the $p=2$ case. Hence,

\begin{equation}
E_2(s)= \frac{p(p-1)x^2}{(1-x)(1-p^2x^2)}.
\end{equation}

\

\textbf{CASE 3 : } $ k ,l,r \geq 1$ 

\

In this case (\ref{3 minor}) implies that both $a_{21}$ and $a_{32}$  are coprime to $2$.  Therefore (except in 3b) we have that $c_{(k,l,r)} = (2^{2k}-2^{2k-1})(2^l-2^{l-1})$) in all the following subcases: 

\

SUBCASE 1 : $ l < r-1$  

\

We can simplify (\ref{main}) as $2^{r-k+1}(a_{32} - 2^{r-l-1}a_{21}) \in \Z$. Since $ l < r-1$, this holds only when $k-1 \leq r$. So our sum is 
\[
E_3(s) = \sum_{r=3}^{\infty}x^r \sum_{k=1}^{r+1} (p^{2k}-p^{2k-1})x^k \sum_{l=1}^{r-2} (p^l - p^{l-1})x^l
\]
\[
= \frac{px(1-p^{-1})^2}{1-px} \sum_{r=3}^{\infty}x^r(1 -(px)^{r-2}) \sum_{k=1}^{r+1} (p^2x)^k
\]
\[
= \frac{p(p-1)^2x^2}{(1-px)(1-p^2x)}\sum_{r=3}^{\infty} (x^r - (px)^{-2}(px^2)^r)(1-(p^2x)^{r+1})
\]
\[
= \frac{p(p-1)^2x^2}{(1-px)(1-p^2x)}\sum_{r=3}^{\infty}\bigg(x^r - (p^2x)(p^2x^2)^r - (px)^{-2}(px^2)^r + x^{-1}(p^3x^3)^r\bigg)
\]
\[
= \frac{p(p-1)^2x^2}{(1-px)(1-p^2x)}\bigg(\frac{x^3}{1-x} - \frac{p^8x^7}{1-p^2x^2} - \frac{px^4}{1-px^2} + \frac{p^9x^8}{1-p^3x^3}\bigg).
\]

We have 

\begin{equation}
E_3(s) = (p(p-1)^2x^5)\left(\frac{\splitfrac{p^7x^6 - p^7x^5 - p^6x^4 + p^5x^5 + p^6x^3}{ - p^5x^4 - p^4x^3 + p^4x^2 - p^3x^3 - p^2x^2 + p^2x + 1}}{(1-x)(1-p^2x^2)(1-p^3x^3)(1-px^2)}\right).
\end{equation}

\

SUBCASE 2 : $ l \geq r $ 

\

In this case we can simplify (\ref{main}) to $2^{2r-k-l}(2^{l-r+1}a_{32} - a_{21}) \in \Z$. This only holds when $k+l \leq 2r$. Our sum is 
\[
E_4(s) = \sum_{r=1}^{\infty} x^r \sum_{l=r}^{2r-1} (p^l - p^{l-1})x^l \sum_{k=1}^{2r-l} (p^{2k} - p^{2k-1})x^k
\]

\[
= \frac{p^2x(1-p^{-1})^2}{1-p^2x}\sum_{r=1}^{\infty} x^r \sum_{l=r}^{2r-1} (px)^l (1 - p^{4r - 2l}x^{2r -l})
\]
\[
= \frac{(p-1)^2x}{1-p^2x}\sum_{r=1}^{\infty} x^r \sum_{l=r}^{2r-1} (px)^l - p^{4r-l}x^{2r}
\]
$$
=  \frac{(p-1)^2x}{(1-p^2x)(1-px)}\bigg(\sum_{r=1}^{\infty} x^r(px)^{r}(1-(px)^r)\bigg) 
$$
$$
- \frac{p(p-1)x}{(1-p^2x)}\bigg(\sum_{r=1}^{\infty} x^r(p^4x^2)^r p^{-r}(1-p^{-r})\bigg)
$$
\[
=\frac{(p-1)^2x}{(1-p^2x)(1-px)}\bigg(\frac{px^2}{1-px^2} - \frac{p^2x^3}{1-p^2x^3}\bigg) - \frac{p(p-1)x}{(1-p^2x)}\bigg(\frac{p^3x^3}{1-p^3x^3} - \frac{p^2x^3}{1-p^2x^3}\bigg).
\]

\

We have 

\begin{equation}
E_4(s) = \frac{p(p-1)^2x^3}{(1-px^2)(1-p^3x^3)(1-p^2x^3)}.
\end{equation}

\

SUBCASE 3 : $ l = r -1 $ 

\

In this case (\ref{main}) reduces to $ 2^{l-k+2}(a_{32} -a_{21}) \in \Z$. 

\

SUBCASE 3a) : $k \leq l+2$ 

\

Equation (\ref{main})  holds for all $a_{32}, a_{21}$ satisfying (\ref{Hermite k}) and (\ref{Hermite l}). Our sum in this case is 
 \[
 E_5(s) = \sum_{l=1}^{\infty} (p^l-p^{l-1})x^{2l+1} \sum_{k=1}^{l+2} (p^{2k}-p^{2k-1})x^k
 \]
 \[
 = \frac{p^2(1-p^{-1})^2x^2}{1-p^2x} \sum_{l=1}^{\infty} (px^2)^l(1-(p^2x)^{l+2})
 \]
 \[
 = \frac{(p-1)^2x^2}{1-p^2x}\bigg(\frac{px^2}{1-px^2} - \frac{p^7x^5}{1-p^3x^3}\bigg).
 \]
 
 We obtain 
 
\begin{equation}
E_5(s) = \frac{p(p-1)^2x^4(1+p^2x-p^3x^3+p^4x^2-p^5x^4)}{(1-px^2)(1-p^3x^3)}.
\end{equation}

\

SUBCASE 3b) $ k > l+2$. 

\

We can rewrite (\ref{main}) as $2^{k-l-2} \mid a_{32} - a_{21}$. Fix $a_{32}$ coprime to $2$ such that $a_{32} < 2^l$. There are $2^l - 2^{l-1}$ choices for $a_{32}$. Since $a_{21}$ is congruent to $a_{32}$ modulo $2^{k-l-2}$ that leaves $2^{k-(k-l-2)} = 2^{l+2}$ choices for $a_{32}$. So that $c_{(k,l,r)} =  2^k(2^{2l+2}-2^{2l+1})$.
\[
E_6(s) = \sum_{l=1}^{\infty} (p^{2l+2} - p^{2l+1})x^{2l+1} \sum_{k = l+3}^{2l} (px)^k
\]
\[
= \frac{(p^2-p)x}{1-px}  \sum_{l=1}^{\infty} (p^2x^2)^l(px)^{l+3}(1-(px)^{l-2})
\]
\[
= \frac{p(p-1)x}{1-px}\bigg(\frac{p^4x^6}{1-p^3x^3} - \frac{p^3x^5}{1-p^4x^4}\bigg). 
\]

We have 

\begin{equation}
E_6(s) = \frac{p^4(p-1)x^6(p^4x^4+p^3x^3-1)}{(1-p^3x^3)(1-p^4x^4)}. 
\end{equation}
\[
\zeta_{R, 2}^{cc}(s) = \sum_{i=1}^6 E_i(s)
\]
\begin{equation}
\zeta_{R, 2}^{cc}(s) = \frac{\left(\splitfrac{1+x+2^2x^2-2^3x^4-2^4x^5+2^4x^6-2^5x^6-2^6x^6}{+2^7x^6+2^7x^8-2^7x^9-2^9x^8+2^8x^9+2^9x^8-2^8x^{10}+2^9x^{10}}\right)}{(1-2^4x^4)(1-2^3x^3)}
\end{equation}
This is easily seen to be holomorphic on a domain containing $\Re s \geq 3/2$. 
A computation shows that the value of $\zeta_{R, 2}^{cc}(3/2)$ is 
$$
\frac{25\sqrt{2}+177}{96 -24\sqrt(2)} \sim 3.422. 
$$
The rest of the statements of Theorem \ref{thm:cocyclic} follow from standard Tauberian theorems and basic properties of the Riemann zeta function.  We note that the quantity $A + B \sqrt{2}$ in the statement of the theorem is equal to $\zeta_{R, 2}^{cc}(3/2)/8$.

\end{document}